\newcommand{\bba}{{\mathbb A}}
\newcommand{\bbf}{{\mathbb F}}
\newcommand{\bbq}{{\mathbb Q}}
\newcommand{\bbz}{{\mathbb Z}}
\newcommand{\gO}{{\mathfrak O}}
\font\tenscr=rsfs10 % scaled \magstep1
\newcommand{\sC}{\hbox{\tenscr C}}
\newcommand{\gal}{{\operatorname{Gal}}}
\newcommand{\Llrarrow}{\Longleftrightarrow}
\newcommand{\lrarrow}{\longrightarrow}
\newcommand{\kableadd}%
{Department of Mathematics\\ Cornell University\\
Ithaca NY 14853}
\newcommand{\lan}{\langle}
\newcommand{\ran}{\rangle}
\newtheorem{thm}{Theorem}[section]
\newtheorem{lem}[thm]{Lemma}
\newtheorem{cond}[thm]{Condition}
\newtheorem{rem}[thm]{Remark}        
\newtheorem*{ack*}{Acknowledgment}
\newtheorem*{note*}{Notation and conventions}
\DeclareFontFamily{OT2}{cmr}{\hyphenchar\font45 }
\DeclareFontShape{OT2}{cmr}{m}{n}{%
   <5><6><7><8><9>gen*wncyr%
   <10><10.95><12><14.4><17.28><20.74><24.88>wncyr10}{}
\DeclareFontShape{OT2}{cmr}{b}{n}{%
   <5><6><7><8><9>gen*wncyb%
   <10><10.95><12><14.4><17.28><20.74><24.88>wncyb10}{}
\DeclareMathAlphabet{\mathcyr}{OT2}{cmr}{m}{n}
\DeclareMathAlphabet{\mathcyb}{OT2}{cmr}{b}{n}
\SetMathAlphabet{\mathcyr}{bold}{OT2}{cmr}{b}{n}
\begin{document}
\pagestyle{headings}

\title{Rational Points on Diagonal Cubic Surfaces}
\author{Kazuki Sato}
\address{Mathematical Institute, Tohoku University, Sendai, Miyagi, 980-8578, Japan}
\email{sb0m17@math.tohoku.ac.jp}
\begin{abstract}
We show under the assumption that the Tate-Shafarevich group of any elliptic curve over $\bbq$ is finite that the cubic surface $x_1^3 + p_1p_2x_2^3 + p_2p_3x_3^3 + p_3p_1x_4^3 = 0$ over $\bbq$ has a rational point, where $p_1, p_2$ and $p_3$ are rational primes congruent to $2$ or $5$ modulo $9$.
\end{abstract}
\subjclass[2010]{11D25,14G05}
\keywords{cubic surface, rational point, Selmer group}
\thanks{}
\date{\today}
\maketitle

\renewcommand\baselinestretch{1.1}

%\tableofcontents
\setcounter{section}{-1}
\section{Introduction}
%%%とりあえず文のつながりとか細かいことは無視して、書き連ねよう
%Swinnerton-Dyerと自分の結果の違い、特にTate-Shafarevich群の有限性を仮定するべき楕円曲線は、ＳＤではeffectiveに決定できないが、自分の結果であればeffectiveに決まる、そしてそれらのシャーの有限性が知られている場合（具体的）に応用できる点が異なる。
%この意義を強調する。
Let $V$ be the smooth projective cubic surface over $\bbq$ defined by
\[ a_1x_1^3 + a_2x_2^3 + a_3x_3^3 + a_4x_4^3 = 0,\]
where $a_1, a_2, a_3, a_4$ are non-zero integers.
It is natural to ask when $V$ has a $\bbq$-rational point.
If $V$ has a $\bbq$-rational point, then obviously $V$ has a $\bbq_p$-rational point for every prime $p$.
We say that the Hasse principle holds if the converse is also true.
Selmer \cite{sel2} showed that if one of the ratios $a_1a_2/a_3a_4$, $a_1a_3/a_2a_4$, $a_1a_4/a_2a_3$ is in $(\bbq^*)^3$ then the Hasse principle holds for $V$.
However the Hasse principle does not hold in general for diagonal cubic surfaces.
The first counterexample
\[ 5x_1^3 +9x_2^3 + 10x_3^3 + 12x_4^3 = 0 \]
was found by Cassels and Guy \cite{cg}.
After that Colliot-Th{\'e}l{\`e}ne, Kanevsky, and Sansuc \cite{ct} gave a lot of counterexamples by calculating the Brauer-Manin obstruction, and showed that if $0 < a_i < 100$ then counterexamples to the Hasse principle are explained by the Brauer-Manin obstruction.
It is conjectured that the Brauer-Manin obstruction is the only obstruction to the Hasse principle for smooth cubic surfaces (see, for example, \cite{sk} for the Brauer-Manin obstruction). 

As for the existence of $\bbq$-rational points on $V$, the remarkable papers are \cite{bf}, \cite{hb}, and \cite{sd}.
%Heath-Brown \cite{hb} proved the existence of $\bbq$-rational point on $V$ in some special case by assuming ``Selmer conjecture''.
%On the other hand, Basile and Fisher \cite{bf} did for the same $V$ by using Galois descent for the Tate-Shafarevich group instead of Selmer conjecture.
%Swinnerton-Dyer \cite{sd} proved stronger result.
%We note that in each above cases the Brauer-Manin obstruction for the Hasse principle is empty.
%Indeed, if there is a bad prime $p$ such that $V$ is not $\bbq_p$-rational, then the Brauer-Manin obstruction is empty \cite{ct}.
%
They are based on the following simple idea;
if we can find a non-zero integer $B$ such that each of the curves
\[a_1x_1^3 + a_2x_2^3 = Bx_0^3, \ \   a_3x_3^3 + a_4x_4^3 = Bx_0^3\]
has a $\bbq$-rational point, then so does $V$.
In order to get a $\bbq$-rational point on each of the curves, it is sufficient to show first that these have a $\bbq_p$-rational point for all $p$, and second that the Hasse principle holds.
Under the assumption that the Tate-Shafarevich group of the elliptic curve of the form
\begin{equation}
 x^3 + y^3  = Az^3 \label{el}
\end{equation}
(the Jacobian of the above curve) is finite, the above strategy works well.
In \cite{hb}, one has to assume that the Selmer conjecture holds instead of the finiteness of the Tate-Shafarevich group.

Swinnerton-Dyer \cite{sd} proved that if $V$ has a $\bbq_p$-rational point for any $p$ and the Tate-Shafarevich group of any elliptic curve (\ref{el}) over any quadratic field is finite, then certain local conditions for the coefficients of $V$ is sufficient for the existence of $\bbq$-rational points, which generalizes \cite{bf} (see \cite{sd} or section $3$ for the precise statement).
In his proof, a descent argument in \cite{bf} plays an important role.

In this paper, we prove the following theorem with the use of Lemma 1.1.
\begin{thm}[Theorem \ref{mainthm}]
Let $p_1, p_2, p_3$ be rational primes such that $p_1,p_2, p_3 \equiv 2, 5 \bmod 9$, and 
\[ V : x_1^3 + p_1p_2x_2^3 + p_2p_3x_3^3 + p_3p_1x_4^3 = 0 \]
 a cubic surface over $\bbq$.
Assume that the Tate-Shafarevich group of any elliptic curve over $\bbq$ is finite.
Then $V(\bbq) \neq \emptyset$.
\end{thm}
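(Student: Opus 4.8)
The plan is to split $V$ into two ternary cubic curves, exhibit a rational point on one of them directly, combine everywhere‑local solubility with Lemma~1.1 to get a rational point on the other, and then glue the two.

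First I would introduce the curves
$$ C:\ x_1^3 + p_1p_2 x_2^3 = p_1p_2 x_0^3, \qquad D:\ p_2p_3 x_3^3 + p_3p_1 x_4^3 = -p_1 p_2 x_0^3 . $$
If both $C(\bbq)$ and $D(\bbq)$ are nonempty, then so is $V(\bbq)$: a point of $D$ automatically has $x_0 \neq 0$ (otherwise $-p_1/p_2$ would be a cube in $\bbq$, which it is not), so after rescaling the two solutions to share a common value of $x_0$ the tuple $(x_1,x_2,x_3,x_4)$ obtained by concatenation satisfies the equation of $V$ and is nonzero. Now $C$ already has the rational point $(x_1,x_2,x_0) = (0,1,1)$, so everything reduces to proving $D(\bbq) \neq \emptyset$.

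Next I would check that $D$ has points over every completion of $\bbq$. Over $\bbr$ this is clear. At every prime $q \notin \{3,p_1,p_2,p_3\}$ the curve $D$ has good reduction; its reduction is a smooth plane cubic over $\bbf_q$, which has an $\bbf_q$-point by the Hasse--Weil bound (the finitely many small $q$ handled by hand, using that every element of $\bbf_q^{\times}$ is a cube when $q \equiv 2 \bmod 3$), and Hensel's lemma lifts it. At $q = p_i$ the hypothesis $p_i \equiv 2 \bmod 3$ again makes every residue mod $p_i$ a cube, so after extracting from the appropriate variable the power of $p_i$ that the equation forces, one is left with a smooth congruence having a solution in units, which Hensel lifts. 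The remaining place $q = 3$ is the delicate one. Here all three coefficients of $D$ are $3$-adic units, and a ternary cubic with $3$-adic unit coefficients $a x^3 + b y^3 + c z^3 = 0$ is soluble over $\bbq_3$ as soon as two of the classes $[a],[b],[c]$ in $\bbz_3^{\times}/(\bbz_3^{\times})^3 \cong \bbz/3$ coincide, since then one of the three points with a vanishing coordinate is $\bbq_3$-rational. Because each $p_i \equiv 2$ or $5 \bmod 9$, and $2,5$ represent the two nontrivial classes of $\bbz_3^{\times}/(\bbz_3^{\times})^3$, the pigeonhole principle forces two of $[p_1],[p_2],[p_3]$ to coincide, hence two of $[p_2p_3],[p_3p_1],[p_1p_2]$ to coincide, and so $D$ has a $\bbq_3$-point. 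Thus $D$ is everywhere locally soluble.

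To conclude, the Jacobian of $D$ is the elliptic curve $x^3 + y^3 = (p_1p_2p_3)^2 z^3$, which is of the shape (\ref{el}); granting the finiteness of its Tate--Shafarevich group, Lemma~1.1 applied to the everywhere locally soluble curve $D$ yields $D(\bbq) \neq \emptyset$, and gluing as above gives $V(\bbq) \neq \emptyset$. I expect the hard part to be exactly this last step: showing that $D$ falls within the scope of Lemma~1.1. Under the standing assumption this is equivalent to the vanishing of the class of $D$ in the Tate--Shafarevich group of its Jacobian, which via a $3$-descent reduces to bounding the relevant Selmer group and then using that a finite Tate--Shafarevich group has square order. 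It is precisely here that the congruences $p_i \equiv 2,5 \bmod 9$ are essential, forcing the Selmer group down to the image of the Mordell--Weil group.
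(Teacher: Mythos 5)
Your overall plan---split $V$ into two plane cubics, dispose of one trivially, and prove the other is soluble by combining everywhere-local solubility with Lemma~1.1---is the same as the paper's, and your local analysis of $D$ at every place is essentially correct. The problem is the global step, and it is not just that you defer the Selmer-group computation (which is the actual content of the proof: Lemma~1.1 only applies after one shows that the $\sqrt{-3}$-Selmer group $S(A)$ over $k=\bbq(\zeta_3)$ has order \emph{exactly} $9$, i.e.\ that every candidate class outside $\langle A,[D]\rangle$ fails to be locally soluble somewhere). The more serious issue is that your fixed decomposition forces $A=(p_1p_2p_3)^2$ in all cases, and when exactly two of the $p_i$ are congruent mod $9$ (say $(p_1,p_2,p_3)\equiv(2,2,5)$) one can check that $|S(A)|\ge 27$, so Lemma~1.1 is simply inapplicable to your curve $D$. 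Indeed, every class $\alpha=p_1^{m_1}p_2^{m_2}$ then lies in $S(A)$: at the inert primes $p_i$ every rational integer prime to $p_i$ is a cube in the residue field $\bbf_{p_i^2}$ (because it lies in $\bbf_{p_i}^*$ and $3\mid p_i+1$), so $C_{A,\alpha}$ becomes isomorphic over $k_{p_i}$ to $p_i^{m}x^3+p_i^{-m}y^3=p_i^2z^3$, which visibly has a point; and at $3$ the three coefficient classes of $C_{A,\alpha}$ in $\bbz_3^*/(\bbz_3^*)^3$ are $(2^{s},2^{-s},2^{\pm1})$ for a suitable $s$, two of which always coincide, giving a $\bbq_3$-point with one coordinate zero. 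Hence $S(A)\supseteq\langle p_1,p_2,p_3\rangle\cong(\bbz/3\bbz)^3$, and no descent through Lemma~1.1 can isolate the class of $D$.

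This is precisely why the paper treats the two congruence patterns differently: in the mixed case it relabels so that $p_1\equiv p_2\not\equiv p_3\bmod 9$ and uses the decomposition with $x_2=0$, i.e.\ the curve $p_2p_3x^3+p_3p_1y^3=z^3$, whose Jacobian is $E_{p_1p_2p_3^2}$; for that choice of $A$ the congruence hypotheses kill all extra classes at $3$ (Lemma~2.5) and $|S(A)|=9$. Only in the case where all three $p_i$ are congruent mod $9$ does your curve $D$ (with $A=p_1^2p_2^2p_3^2$, Lemma~2.6) work. To repair your argument you must let the middle constant $B$ depend on the pattern of $(p_1,p_2,p_3)$ mod $9$, and then actually carry out the two Selmer computations; the congruences mod $9$ enter exactly there, through the insolubility of $x^3+2y^3=5z^3$ and $4x^3+y^3=7z^3$ modulo $9$. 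A last small point: Lemma~1.1 produces a $k$-point on $D$, not a $\bbq$-point, so one still needs the standard remark that a plane cubic (or cubic surface) over $\bbq$ with a point over the quadratic field $k$ has a $\bbq$-point.
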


\begin{rem}\normalfont
i) We see easily that $V$ has a $\bbq_p$-rational point for any prime $p$.

ii) $V$ is birationally equivalent to a plane over $\bbq_{p_i}$ for $i = 1,2,3$, but not over $\bbq_3$.
Hence it follows from \cite[Proposition 2]{ct} that the Brauer-Manin obstruction to the Hasse principle is empty.

iii) We cannot apply \cite[Theorem 3]{sd} to this case (see Remark \ref{rem34} for the details).

iv) We do not need to assume the finiteness of the Tate-Shafarevich groups of all elliptic curves (Remark \ref{rem1}).
For example, it is sufficient to assume that the Tate-Shafarevich group of the elliptic curve
\[ x^3 + y^3 = p_1p_2p_3^2z^3 \]
over $\bbq$ is finite if $(p_1,p_2,p_3) \equiv (2,2,5)$ or $(5,5,2) \bmod 9$.
\end{rem}

%%%%%%%%%セクション３に関する説明は不十分。
%%%%%%%%セクション３の動機づけ、意味、セクション２との関連を、イントロだけでわかる範囲で説明する。
%%%%%%55イントロだけで意味が取れるように書く、もしくは、丁寧に主張を詳しく書く。意味がすぐに取れないことは書かない。

We give an overview of this paper.
In section $1$, we recall briefly the argument on descent developed by Basile and Fisher and introduce some notations, which are used throughout this paper.
For the details of section $1$, see \cite{bf}.
In section $2$, we calculate explicitly the $\sqrt{-3}$-Selmer group $S(A)$ (see section $1$ for the definition) for a cube free integer $A \in \bbz \setminus \{ 0, \pm 1 \}$, and prove Theorem \ref{mainthm}.
In section $3$, we give some sufficient conditions for the diagonal cubic surface $V$ to have a $\bbq$-rational point (Theorem \ref{sum}),
which is an analog of \cite[Theorem 1]{sd}.
Our proof is based on \cite[Theorem 3]{sd}.
It turns out in Remark \ref{rem34} that Theorem \ref{sum} (hence \cite[Theorem 3]{sd}) does not imply Theorem \ref{mainthm}.

%we first recall the results of Swinnerton-Dyer.
%As he said that ``\dots \textit{if} $\gp | 3$ \textit{the number of cases to be considered would become tedious, so we restrict the following discussion to places which do not divide} $3$''$($\cite[p. 911]{sd}$)$, \cite[Theorem 1]{sd} treats only the case where places do not devide $3$.
%We use \cite[Theorem 3]{sd} to deduce Theorem \ref{sum}, which is a version treating $3$ of \cite[Theorem 1]{sd}.

\section{Descent on a Elliptic Curve}
Let $\zeta_3 \in \bar{\bbq}$ be a fixed primitive cube root of unity and $k = \bbq(\zeta_3)$.
For a cube free integer $A \in \bbz \setminus \{ 0,\pm 1 \} $, $E_A$ denotes the elliptic curve defined by
\[ E_A : x^3 + y^3 = Az^3\]
with identity $O = ( 1, -1, 0)$.
$E_A$ admits complex multiplication, so that $\operatorname{End}_k(E_A) = \bbz [\zeta_3 ]$.
In fact, $\zeta_3$ acts on $E_A$ by $(x,y,z) \mapsto (x,y,\zeta_3 z)$,
and the multiplication-by-$\sqrt{-3}$ endomorphism on $E_A$ is given by
\[ \sqrt{-3} : (x,y,z) \mapsto ( \zeta_3 x^3 - \zeta_3^2 y^3, \zeta_3 y^3 - \zeta_3^2 x^3, (\zeta_3- \zeta_3^2 ) xyz ).\]
Note that the group $E_A(\bar{k})[\sqrt{-3}]$ of $\sqrt{-3}$-torsion points is isomorphic to the group $\mu_3$ of cube roots of unity as a $\gal (\bar{\bbq} / k)$-module.
By Kummer theory, we have the short exact sequence
\[ 0 \lrarrow E_A(k)/\sqrt{-3} E_A(k) \lrarrow H^1(k, E_A[ \sqrt{-3}]) \overset{f}{\lrarrow} H^1(k, E_A)[\sqrt{-3} ] \lrarrow 0.\]
With the identification $H^1(k, E_A[ \sqrt{-3}]) \cong k^*/(k^*)^3$, for $\alpha \in k^*$, the image of $\alpha(k^*)^3$ under the above map $f$ is the class of the $k$-torsor
\[ C_{A,\alpha} : \alpha x^3 + \alpha^{-1}y^3 = Az^3 \]
under $E_A$.
We denote by $S(A)$ the $\sqrt{-3}$-Slemer group $S^{(\sqrt{-3})}(E_A /k)$, which is defined by
\[ S(A) = \ker[H^1(k, E_A[ \sqrt{-3}]) \lrarrow H^1(k, E_A) \lrarrow \prod_v H^1(k_v, E_A) ],  \]
where $v$ runs over all places of $k$.
Then $S(A)$ consists of $\alpha(k^*)^3 \in k^*/(k^*)^3$ for which the corresponding curve $C_{A,\alpha}$ has a $k_v$-rational point for all $v$.
We denote by $C(A)$ the subgroup of $S(A)$ consisting of elements which lies in the kernel of $f$.
By definition, $\alpha(k^*)^3 \in C(A)$ corresponds to the curve $C_{A,\alpha}$ having a $k$-rational point, and
we have the exact sequence
\[ 0 \lrarrow C(A) \lrarrow S(A) \lrarrow \mathcyr{Sh}(E_A/k)[\sqrt{-3} ] \lrarrow 0,\]
where $\mathcyr{Sh}(E_A/k)$ is the Tate-Shafarevich group of $E_A$ over $k$.
We note that $C(A)$ contains a nontrivial element $A(k^*)^3$ since $C_{A,A}$ has a point $(1,0,1)$.
Then Basile and Fisher showed the following lemma.

\begin{lem}[{\cite{bf}}]\label{bf}
%
%\begin{lem}[{\cite[Lemma 4]{ca}}]
%Let $E$ be a elliptic curve over a number field $L$,
%and $K/L$ be a Galois extension of degree $n$.
%%E/L$を代数体$L$上の楕円曲線とし, $K/L$を$n$次Galois拡大とする.
%Let $m$ be a positive integer, and assume that $m$ and $n$ are coprime.
%%を正整数で$n$と互いに素であると仮定する.
%Then we have
%\[ \mathcyr{Sh}(E/L)[m] = \mathcyr{Sh}(E/K)[m]^{\operatorname{Gal}(K/L)}.\]
%In particular, if we assume that the Tate-Shafarevich group $\mathcyr{Sh}(E/L)$ is finite, the order of $\mathcyr{Sh}(E/K)[m]^{\operatorname{Gal}(K/L)}$is a perfect square.
%\end{lem}
%
%
%\begin{lem}[{\cite[Lemma 5]{ca}}]
%If $\mathcyr{Sh}(E_A/\bbq)$ is a finite group, then the order of $\mathcyr{Sh}(E_A/k)[\sqrt{-3}]$ is not $3$.
%\end{lem}
Notations are the same as above.
Assume that the Tate-Shafarevich group $\mathcyr{Sh}(E_A/\bbq)$ of $E_A$ over $\bbq$ is finite.
If the order of $S(A)$ is $9$, then $C(A) = S(A)$.
\end{lem}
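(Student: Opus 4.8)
The plan is to read off the vanishing of $\mathcyr{Sh}(E_A/k)[\sqrt{-3}]$ from the exact sequence $0 \to C(A) \to S(A) \to \mathcyr{Sh}(E_A/k)[\sqrt{-3}] \to 0$ recalled above. First I would note that $C(A)$ already contains the nontrivial class $A(k^*)^3$: a cube-free integer $A \ne 0,\pm 1$ is not a cube in $k$, because $[k:\bbq]=2$ is prime to $3$ (if $A=\gamma^3$ with $\gamma\in k$, then $A^2 = N_{k/\bbq}(\gamma)^3$ would be a cube in $\bbq$, forcing $A$ to be a cube in $\bbz$). Hence $\dim_{\bbf_3}C(A)\ge 1$, and since $\dim_{\bbf_3}S(A)=2$ this gives $\dim_{\bbf_3}\mathcyr{Sh}(E_A/k)[\sqrt{-3}]\le 1$. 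So it is enough to prove that this dimension is \emph{even}: then it is $0$, and the exact sequence yields $C(A)=S(A)$.

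For the parity I would pass down to $\bbq$. The order-$3$ subgroup $E_A[\sqrt{-3}]=\{O,(1,-\zeta_3,0),(1,-\zeta_3^2,0)\}$ is stable under $\gal(\bar{\bbq}/\bbq)$, complex conjugation interchanging the two nontrivial points, so $\sqrt{-3}$ descends to a $3$-isogeny $\psi\colon E_A\to E_A':=E_A/E_A[\sqrt{-3}]$ defined over $\bbq$, with dual $\hat\psi\colon E_A'\to E_A$. Using the $\gal(k/\bbq)$-action on $H^1(k,E_A[\sqrt{-3}])$ — together with the fact that the mod-$3$ cyclotomic character is exactly the quadratic character of $k/\bbq$ — one should identify the $\sqrt{-3}$-descent over $k$ with the $\psi$- and $\hat\psi$-descents over $\bbq$, obtaining $\mathcyr{Sh}(E_A/k)[\sqrt{-3}]\cong\mathcyr{Sh}(E_A/\bbq)[\psi]\oplus\mathcyr{Sh}(E_A'/\bbq)[\hat\psi]$. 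By Cassels's theorem the Cassels-Tate pairing then gives a perfect pairing $\mathcyr{Sh}(E_A/\bbq)[\psi]\times\mathcyr{Sh}(E_A'/\bbq)[\hat\psi]\to\bbq/\bbz$ once both groups are finite; and $\mathcyr{Sh}(E_A'/\bbq)$ is finite, being isogenous over $\bbq$ to $E_A$ whose $\mathcyr{Sh}$ is finite by hypothesis. Hence the two summands have equal order, $|\mathcyr{Sh}(E_A/k)[\sqrt{-3}]|$ is a perfect square, and its $\bbf_3$-dimension is even.

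Putting the two steps together, $\dim_{\bbf_3}\mathcyr{Sh}(E_A/k)[\sqrt{-3}]$ is both $\le 1$ and even, hence $0$; by the exact sequence $C(A)=S(A)$, as desired.

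The hard part is the comparison in the second step: one must check, place by place, that the local conditions cutting out $S(A)=S^{(\sqrt{-3})}(E_A/k)$ match up $\gal(k/\bbq)$-equivariantly with those of the two mutually dual isogeny-Selmer groups over $\bbq$, so that the $\mathcyr{Sh}$-pieces split as stated and Cassels's perfect pairing applies — in particular carrying the finiteness hypothesis from $E_A$ across $\psi$ to $E_A'$. I would emphasize that the weaker fact ``the Cassels-Tate form on $\mathcyr{Sh}(E_A/k)$ is alternating'' does not by itself suffice, since its restriction to the $\sqrt{-3}$-component can be degenerate; it is really the perfect pairing between the two \emph{distinct} curves $E_A$ and $E_A'$ that forces the square-order, hence parity, conclusion.
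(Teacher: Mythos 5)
The paper offers no proof of this lemma --- it is quoted from \cite{bf} --- so your proposal can only be measured against the Basile--Fisher argument, which it essentially reconstructs: bound $\dim_{\bbf_3}\mathcyr{Sh}(E_A/k)[\sqrt{-3}]\le 1$ using the nontrivial class of $A$ in $C(A)$, then force this dimension to be even by descending the $\sqrt{-3}$-descent to the pair of dual $3$-isogenies $\psi,\hat\psi$ over $\bbq$ and invoking the Cassels--Tate pairing. Your first step is fine, and your observation that the naive route (restricting the alternating pairing on $\mathcyr{Sh}(E_A/k)$ to the $\sqrt{-3}$-torsion) proves nothing is exactly right.

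There is, however, one incorrect assertion at the point where the parity is actually extracted. The Cassels--Tate pairing does \emph{not} induce a perfect pairing between $\mathcyr{Sh}(E_A/\bbq)[\psi]$ and $\mathcyr{Sh}(E_A'/\bbq)[\hat\psi]$, and these two groups need not have equal order: if $\mathcyr{Sh}(E_A'/\bbq)=0$ and $\mathcyr{Sh}(E_A/\bbq)\cong(\bbz/3)^2$, then $\psi$ kills all of $\mathcyr{Sh}(E_A/\bbq)$ and the two orders are $9$ and $1$. What the adjunction $\langle\psi a,b'\rangle=\langle a,\hat\psi b'\rangle$ actually yields, once both groups are finite, is $\mathcyr{Sh}(E_A/\bbq)[\psi]=\bigl(\hat\psi\,\mathcyr{Sh}(E_A'/\bbq)\bigr)^{\perp}$, whence
\[
|\mathcyr{Sh}(E_A/\bbq)[\psi]|\cdot|\mathcyr{Sh}(E_A'/\bbq)[\hat\psi]|
=\frac{|\mathcyr{Sh}(E_A/\bbq)|}{|\mathcyr{Sh}(E_A'/\bbq)|}\cdot\bigl|\mathcyr{Sh}(E_A'/\bbq)[\hat\psi]\bigr|^{2}.
\]
To conclude that this product is a perfect square --- which is all you need, since it gives evenness of the total $\bbf_3$-dimension --- you must additionally invoke the fact that each of $|\mathcyr{Sh}(E_A/\bbq)|$ and $|\mathcyr{Sh}(E_A'/\bbq)|$ is itself a square (the alternating Cassels--Tate pairing on each curve separately; unproblematic for the $3$-primary part). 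With that correction the argument closes; as written, the ``equal order'' step is false. The remaining comparison --- the $\gal(k/\bbq)$-eigenspace decomposition of $\mathcyr{Sh}(E_A/k)[3]$, the identification of $E_A'$ with the quadratic twist of $E_A$ by $k/\bbq$, and the transfer of finiteness across the isogeny --- is correct and is indeed where the real content of \cite{bf} lies.
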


\section{Calculation of Selmer groups}
Let $A \in \bbz \setminus \{ 0, \pm 1 \}$ be a cube free integer.
In this section we calculate the Selmer group $S(A)$ for some $A$.
According to the previous section, we regard $S(A)$ as a subgroup of $k^*/(k^*)^3$.
For $\alpha \in k^*$ our task is to determine the condition for $C_{A,\alpha}$ to have a $k_v$-rational point for all places $v$.
In the following, we identify an element of $k^*$ with its image in $k^*/(k^*)^3$.
For a prime element $q$ in $\bbz[\zeta_3]$, denote by $k_q$ the completion of $k$ for the topology defined by the $(q)$-adic valuation. 

\begin{lem}\label{3.1}
Let $\alpha \in \bbz[\zeta_3]$ be a non-zero cube free integer.
If $q|\alpha$ is a prime in $\bbz[\zeta_3]$ such that $q\not|A$,
then $C_{A,\alpha}(k_q) = \emptyset$.
\end{lem}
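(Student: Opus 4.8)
The plan is to derive a contradiction from the existence of a $k_q$-point by a $q$-adic valuation argument modulo $3$. The point of the hypotheses is that cube-freeness of $\alpha$ forces $v_q(\alpha)\in\{1,2\}$, neither divisible by $3$, while $q\nmid A$ forces $v_q(A)=0$.

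First I would normalize a hypothetical point. Since $C_{A,\alpha}$ is a smooth plane cubic, a point of $C_{A,\alpha}(k_q)$ is represented by a projective triple, which after scaling may be taken to lie in $\co_q^3$ (where $\co_q$ is the ring of integers of $k_q$) with not all three coordinates in the maximal ideal $q\co_q$. Multiplying the defining equation $\alpha x^3+\alpha^{-1}y^3=Az^3$ by $\alpha$, the triple $(x,y,z)$ satisfies
\[ \alpha^2x^3+y^3=A\alpha z^3. \]
Write $v=v_q$ and $a=v(\alpha)$, so $a\in\{1,2\}$ and $v(A)=0$.

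The main step is then a comparison of the three monomials. Their valuations are $v(\alpha^2x^3)=2a+3v(x)$, $v(y^3)=3v(y)$, and $v(A\alpha z^3)=a+3v(z)$, hence are congruent mod $3$ to $2a$, $0$, $a$ respectively. Since $a$ is a unit mod $3$, the classes $0,a,2a$ are pairwise distinct, so the three valuations are pairwise distinct elements of $\bbz\cup\{\infty\}$ (a vanishing monomial contributing $\infty$). A one-line check using the equation and the primitivity of $(x,y,z)$ shows that at most one of $x,y,z$ can vanish, so the three monomials are not all zero. In the identity $\alpha^2x^3+y^3-A\alpha z^3=0$ the minimum of the three (pairwise distinct) valuations is therefore attained exactly once and is finite, forcing the left-hand side to have finite valuation and contradicting that it equals $0$. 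Hence $C_{A,\alpha}(k_q)=\emptyset$.

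I do not expect a genuine obstacle here: the argument is uniform in $q$, since it uses only that $v_q$ is a normalized discrete valuation (so the ramified prime above $3$ is not special), and the only care needed is in the reduction to a primitive integral representative and in checking that at most one coordinate of such a representative vanishes. The arithmetic heart is simply that $v_q(\alpha)\not\equiv 0\pmod 3$, which distributes the three monomials over the three residue classes mod $3$ and so precludes any cancellation.
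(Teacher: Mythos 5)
Your proposal is correct and follows essentially the same route as the paper: multiply the equation by $\alpha$ to get $\alpha^2x^3+y^3=A\alpha z^3$ and observe that the three monomials have $q$-adic valuations in the three distinct residue classes $2v_q(\alpha)$, $0$, $v_q(\alpha)$ modulo $3$ (using $v_q(A)=0$ and $v_q(\alpha)\not\equiv 0\bmod 3$), so the ultrametric inequality forbids the cancellation required for the sum to vanish. Your added care about choosing a primitive integral representative and ruling out vanishing coordinates only makes explicit what the paper's terser proof leaves implicit.
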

\begin{proof}
Assume that $C_{A,\alpha}(k_q) \neq \emptyset$.
For a rational point $(x,y,z) \in C_{A,\alpha}(k_q)$ we have
\[ \alpha^2 x^3 + y^3 = \alpha Az^3.\]
Let $v_q$ be the discrete valuation associated with the prime $q$.
Since $v_q(A) =0$,
\[ v_q(\alpha^2 x^3 ) \equiv 2v_q(\alpha), \ v_q(y^3) \equiv 0,\ v_q(\alpha A z^3) \equiv v_q(\alpha) \bmod 3.\]
It follows from $v_q(\alpha) \not\equiv 0 \bmod 3$ that
\[ v_q(\alpha A z^3) = \min \{ v_q(\alpha^2 x^3 ),  v_q(y^3)\}. \]
This is a contradiction.
\end{proof}

\begin{lem}\label{3.2}
Let $\alpha$ be a non-zero element in $\bbz[\zeta_3]$ and $q \nmid 3\alpha A$ a prime in $\bbz[\zeta_3]$.
Then $C_{A,\alpha}(k_q) \neq \emptyset$.
\end{lem}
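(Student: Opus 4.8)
The plan is to produce a point on $C_{A,\alpha}$ over $k_q$ by reduction modulo $q$ and Hensel's lemma, exploiting the fact that $C_{A,\alpha}$ has good reduction at $q$. Since $q \nmid \alpha$, I would first replace the defining equation $\alpha x^3 + \alpha^{-1}y^3 = Az^3$ by the projectively equivalent one
\[ \alpha^2 x^3 + y^3 = \alpha A z^3, \]
all of whose coefficients $\alpha^2, 1, \alpha A$ are units in $\co_{k_q}$ because $q \nmid \alpha A$. This cubic form cuts out a projective scheme $\mathcal C$ over the ring of integers $\co_{k_q}$ of $k_q$, with generic fibre $C_{A,\alpha}$.

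Next I would check that the special fibre of $\mathcal C$ over the residue field $\kappa = \co_{k_q}/(q)$ is smooth. As $q \nmid 3$, the class of $3$ in $\kappa$ is a unit, so any common zero of the reduced form $\bar\alpha^2 x^3 + y^3 - \bar\alpha\bar A z^3$ and its partials $3\bar\alpha^2 x^2$, $3y^2$, $-3\bar\alpha\bar A z^2$ must satisfy $x = y = z = 0$, which is not a point of $\bbp^2$. Hence $\mathcal C$ is smooth over $\co_{k_q}$ and its special fibre is a smooth plane cubic, that is, a geometrically connected curve of genus $1$ over the finite field $\kappa$.

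Such a curve always has a $\kappa$-rational point. Indeed, the Hasse bound gives at least $|\kappa| + 1 - 2\sqrt{|\kappa|} = (\sqrt{|\kappa|} - 1)^2$ points, and since $q \nmid 3$ the residue field satisfies $|\kappa| \geq 4$ (the extreme case $|\kappa| = 4$ occurring when $q$ lies over $2$), so this lower bound is at least $1$; alternatively, Lang's theorem shows the curve is a trivial torsor under its Jacobian, giving the point with no case analysis. Finally, Hensel's lemma applied to the smooth $\co_{k_q}$-scheme $\mathcal C$ lifts any point of the special fibre to a point of $\mathcal C(\co_{k_q}) \subseteq C_{A,\alpha}(k_q)$, proving the lemma.

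This argument is routine; the only point needing a moment's thought is the genus-$1$ point count when $|\kappa| = 4$, where the Hasse bound is tight at $\geq 1$ (and Lang's theorem avoids even this). I do not anticipate any genuine obstacle — this is the easy ``good reduction'' case, to be contrasted with the analysis of the primes dividing $3\alpha A$ carried out in the surrounding lemmas.
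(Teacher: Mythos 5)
Your proof is correct and follows exactly the same route as the paper: exhibit a smooth projective model over $\gO_q$ (good reduction since $q \nmid 3\alpha A$), note that its special fibre is a genus-$1$ curve over a finite field and hence has a rational point, and lift by Hensel's lemma. The paper simply cites Cassels for the point on the special fibre where you invoke the Hasse bound or Lang's theorem; your extra verifications (the Jacobian criterion using $q \nmid 3$, the residue-field size) are correct but not needed beyond what the paper records.
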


\begin{proof}
The curve
\[C_{A,\alpha}: \alpha x^3 + \alpha^{-1}  y^3 = Az^3 \]
has good reduction at $q$ and let $\sC$ be the smooth projective model of $C_{A,\alpha}$ over $\gO_q$ defined by the above equation, where $\gO_q$ is the ring of integers in $k_q$.
Since the special fiber of $\sC$ is a curve of genus $1$ over a finite field, it has a rational point (see \cite{ca}).
By smoothness and Hensel's lemma, $C_{A,\alpha}$ has a $k_q$-rational point, too.
\end{proof}

\begin{lem}\label{3.3}
Let $\alpha \in \bbz[\zeta_3]$ be a non-zero cube free integer and $q$ a prime in $\bbz[\zeta_3]$ satisfying $q \| A$ and $q \nmid 3$, where 
$q^n \| A$ for an integer $n$ means that $q^n |A$ and $q^{n+1} \nmid A$.
Then
\begin{equation*}
C_{A,\alpha} (k_q) \neq \emptyset \Llrarrow 
\begin{cases}
   \left( \frac{A\alpha^{-1}}{q} \right)_3 = 1 & \textrm{if} \ q \| \alpha, \\
\\
   \left( \frac{A\alpha q^{-3}}{q} \right)_3 = 1     &  \textrm{if} \ q ^2 \| \alpha, \\
\\  
 \left( \frac{\alpha}{q} \right)_3 = 1  &  \textrm{if} \ q \nmid \alpha.
                                              \end{cases}
\end{equation*}
\end{lem}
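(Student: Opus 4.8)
The plan is to work locally at $q$: I replace $C_{A,\alpha}$ by a $k_q$-isomorphic curve with reduced special fibre and then decide directly whether it has a $k_q$-point. Since $\alpha$ is cube free, $v_q(\alpha)\in\{0,1,2\}$, and these three cases correspond to the three clauses of the statement. Throughout write $A=qa$ with $a\in\gO_q^{\times}$, which is legitimate because $q\|A$; the hypothesis $q\nmid 3$ ensures both that $3\in\gO_q^{\times}$ and that the residue field of $k_q$ has order $\equiv 1\bmod 3$, so the cubic residue symbol modulo $q$ behaves as usual.

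First I would produce the models. Multiplying $\alpha x^{3}+\alpha^{-1}y^{3}=Az^{3}$ by $\alpha$ gives the $k_q$-isomorphic curve $\alpha^{2}x^{3}+y^{3}=A\alpha z^{3}$; when $q\nmid\alpha$ this already has the shape I want. When $v_q(\alpha)=1$, write $\alpha=qu$ with $u\in\gO_q^{\times}$; multiplying the equation by $q$, applying the $k_q$-linear change of coordinates $y=qy_{1}$, and dividing by $q^{2}$ yields $ux^{3}+qu^{-1}y_{1}^{3}=az^{3}$. When $v_q(\alpha)=2$, write $\alpha=q^{2}u$; the analogous steps (multiply by $q^{2}$, set $y=qy_{1}$, divide by $q^{3}$) yield $qux^{3}+u^{-1}y_{1}^{3}=az^{3}$. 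In each of the three models the reduction modulo $q$ is a union of three concurrent lines, and exactly one of the three monomials has $q$-adic valuation $\equiv 1\bmod 3$ while the other two have valuation $\equiv 0\bmod 3$.

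Next I would prove both implications for each model by a valuation count. Given a projective point with coordinates in $\gO_q$, not all in $q\gO_q$: the ``odd'' monomial can never cancel against either of the others, so the equation forces the two ``even'' monomials to have equal valuation and their leading units to cancel modulo $q$, and primitivity then forces the two corresponding coordinates to be units. Reading the cancellation relation modulo $q$ produces exactly the stated condition; for instance when $q\nmid\alpha$ one gets $-\alpha^{2}\equiv(\text{unit})^{3}\bmod q$, which, since $-1$ is a cube and $3\nmid 2$, says $\left(\frac{\alpha}{q}\right)_{3}=1$. Conversely, assuming the relevant symbol is $1$, I would write down a point by hand: in each model set two of the coordinates to $0$ or $1$ so that no negative powers of $q$ intervene, and solve the remaining one-variable equation $T^{3}=(\text{unit})$ by Hensel's lemma, the derivative $3T^{2}$ being a unit. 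Concretely: for $v_q(\alpha)=1$ take $y_{1}=0$, $z=1$ and solve $ux^{3}=a$, i.e.\ $x^{3}=A\alpha^{-1}$; for $v_q(\alpha)=2$ take $x=0$, $z=1$ and solve $y_{1}^{3}=A\alpha q^{-3}$; for $q\nmid\alpha$ take $x=z=1$ and solve $y^{3}=A\alpha-\alpha^{2}$, noting $A\alpha-\alpha^{2}\equiv-\alpha^{2}\bmod q$. Unwinding $u=\alpha/q$ and $a=A/q$ turns the three conditions into $\left(\frac{A\alpha^{-1}}{q}\right)_{3}=1$, $\left(\frac{A\alpha q^{-3}}{q}\right)_{3}=1$, and $\left(\frac{\alpha}{q}\right)_{3}=1$ respectively, which is the claim.

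The only real work here is the bookkeeping: choosing, in each case, the correct rescaling and coordinate change so that the model has reduced special fibre, and then keeping careful track of which power of $q$ divides which coefficient while running the valuation comparison and checking primitivity. There is no conceptual obstacle. The one pitfall in the ``if'' direction is that the common point of the three lines in the special fibre is singular, so Hensel's lemma does not apply there; choosing the two free coordinates as above automatically keeps the constructed point away from it.
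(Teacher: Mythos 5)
Your proposal is correct and follows essentially the same route as the paper: in each of the three cases you clear denominators to reach the same integral model whose special fibre is three concurrent lines, and you read off the cubic residue condition from which line(s) are defined over $\bbf_q$. The only difference is one of packaging --- the paper asserts that each model is a regular proper scheme over $\gO_q$ and invokes the correspondence between $k_q$-points, $\gO_q$-points, and smooth $\bbf_q$-points of the special fibre, whereas you justify both implications directly by the ultrametric valuation count on a primitive integral point and by an explicit Hensel lift (valid since $3\in\gO_q^{\times}$), which amounts to an elementary proof of the same fact.
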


\begin{proof}
First, we consider the case where $q \| \alpha$.
The curve
\[ C_{A,\alpha} : \alpha x^3 + \alpha^{-1}y^3 = Az^3 \]
is isomorphic to the curve defined by
\[ \frac{\alpha^2}{q^2}x^3 + qy^3 = \frac{\alpha A}{q^2}z^3. \]
This equation also defines a regular projective model $\sC$ of $C_{A, \alpha}$ over $\gO_q$.
Therefore $C_{A,\alpha}$ has a $k_q$-rational point if and only if $\sC$ has a $\gO_q$-valued point, and this is equivalent to that the special fiber of $\sC$ has a $\bbf_q$-rational point on the smooth locus, where $\bbf_q$ is the residue field of $\gO_q$.
Since the special fiber is defined by
\[ \frac{\alpha^2}{q^2}x^3  = \frac{\alpha A}{q^2}z^3, \]
we have 
\[
C_{A,\alpha}(k_q) \neq \emptyset \Llrarrow \frac{A}{\alpha} \in (\bbf_q ^*)^3
                                 \Llrarrow \left( \frac{A\alpha^{-1}}{q} \right)_3 =1.
\]

 Second, the case where $q^2 \| \alpha$.
The curve $C_{A,\alpha}$ has the regular projective model defined by
\[ q\frac{\alpha^2}{q^4}x^3 + y^3 = \frac{\alpha A}{q^3}z^3. \]
Its special fiber is defined by
\[ y^3 = \frac{\alpha A}{q^3}z^3,\]
therefore a similar argument shows that 
\[
C_{A,\alpha}(k_q) \neq \emptyset \Llrarrow \frac{\alpha A}{q^3} \in (\bbf_q ^*)^3
                                 \Llrarrow \left( \frac{A\alpha q^{-3}}{q} \right)_3 =1.
\]

Finally, $q \nmid \alpha$.
The equation 
\[ C_{A,\alpha} : \alpha x^3 + \alpha^{-1}y ^3 = Az^3 \]
gives a regular proper model, and its special fiber is
\[ \alpha x^3 + \alpha ^{-1}y^3 = 0. \]
Therefore we have
\[
C_{A,\alpha}(k_q) \neq \emptyset \Llrarrow \alpha^2 \in (\bbf_q ^*)^3 
                                 \Llrarrow \left( \frac{\alpha^2}{q} \right)_3 =1 
                                 \Llrarrow \left( \frac{\alpha}{q} \right)_3 =1.
\]
\end{proof}

\begin{lem}\label{cubicresidue}
Let $\alpha \in \bbz[\zeta_3]$ be a non-zero cube free integer and $p \equiv 2 \bmod 3$ a rational prime number satisfying
$p \| A$.
Then
\[
C_{A,\alpha} (k_p) \neq \emptyset \Llrarrow \left( \frac{\alpha p^{-v_p(\alpha)}}{p} \right) _3 =1.
\]
\end{lem}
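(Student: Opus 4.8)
The plan is to deduce the lemma directly from Lemma~\ref{3.3}. Since $p \equiv 2 \bmod 3$, the prime $p$ is inert in $\bbz[\zeta_3]$, so $q := p$ is itself a prime element of $\bbz[\zeta_3]$; moreover $q \nmid 3$ (as $p \neq 3$), and $p \| A$ forces $v_p(A) = 1$, i.e.\ $q \| A$. Thus the hypotheses of Lemma~\ref{3.3} hold with this $q$, and since $\alpha$ is cube free we have $v := v_p(\alpha) \in \{0,1,2\}$, so Lemma~\ref{3.3} splits into exactly these three cases. It then remains to check that, in each case, the cubic residue condition produced there coincides with $\left( \frac{\alpha p^{-v}}{p} \right)_3 = 1$.

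The one genuine ingredient — the only step that is not bookkeeping — is the observation that the dependence on $A$ evaporates. The residue field of $q = p$ is $\bbf_{p^2}$, and because $p \equiv 2 \bmod 3$ we have $3 \nmid p - 1$, so the cubing map is a bijection of $\bbf_p^*$; hence every rational integer prime to $p$ is a cube in $\bbf_{p^2}^*$, that is, $\left( \frac{m}{p} \right)_3 = 1$ for all $m \in \bbz$ with $p \nmid m$. I would record this as a one-line observation and use it to kill the $A$-contribution in the symbols appearing in Lemma~\ref{3.3}.

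To finish, write $A = pA'$ and $\alpha = p^{v}\alpha'$ with $A' \in \bbz$ and $\alpha' \in \bbz[\zeta_3]$ both prime to $p$, so that $\alpha' = \alpha p^{-v}$ is an algebraic integer prime to $p$ and the symbol $\left( \frac{\alpha'}{p} \right)_3$ is defined. For $v = 0$, Lemma~\ref{3.3} already gives the condition $\left( \frac{\alpha}{p} \right)_3 = 1$, and $\alpha = \alpha'$. For $v = 1$, the tested quantity $A\alpha^{-1}$ equals the $p$-unit $A'(\alpha')^{-1}$, so by multiplicativity of the cubic residue symbol and the observation above its value is $\left( \frac{\alpha'}{p} \right)_3^{-1}$. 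For $v = 2$, the tested quantity $A\alpha p^{-3}$ equals $A'\alpha'$, with symbol $\left( \frac{\alpha'}{p} \right)_3$. In each case this is a cube root of unity, hence trivial precisely when $\left( \frac{\alpha'}{p} \right)_3 = 1$, which is the asserted equivalence. I do not expect a real obstacle here; the only points needing care are the inertness of $p$ and the verification that $\alpha p^{-v}$ lies in $\bbz[\zeta_3]$ and is prime to $p$, both of which follow from the cube-freeness of $\alpha$.
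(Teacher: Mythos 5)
Your proposal is correct and follows essentially the same route as the paper: reduce to Lemma~\ref{3.3} using that $p$ is inert, observe that every rational integer prime to $p$ has trivial cubic residue symbol at $p$ (the paper phrases this as $\bbz_p^{*} \subset (\bbq_p^*)^3$), and then check the three cases $v_p(\alpha) \in \{0,1,2\}$ by multiplicativity. No gaps.
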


\begin{proof}
Since $p \equiv 2 \bmod 3$, $p$ remains a prime in $\bbz[\zeta_3]$ and $\bbz_p^{*} \subset (\bbq_p^*)^3$.
Therefore if $v_p(\alpha)=1$, then
\[ \left( \frac{A\alpha^{-1}}{p} \right)_3 = \left( \frac{p\alpha^{-1}}{p} \right)_3 = \left( \frac{\alpha p^{-1}}{p} \right)_3^{-1}.\]
If $v_p(\alpha) = 2$, then
\[ \left( \frac{A\alpha p^{-3}}{p} \right)_3 = \left( \frac{\alpha p^{-2}}{p} \right)_3.\]
Hence the Lemma follows from Lemma \ref{3.3}.
\end{proof}

%%%%%%%%%%%%%%%%%%%%%%%%%%%%%%%%%%%%%%%%%%%%%%%%%%%%%%%%%%%%%%%%%
%%%%%ここに、アルファの帰着の話をまとめる
%%%%%%%%%%%%%%%%%%%%%%%%%%%%%%%%%%%%%%%%%%%%%%%%%%%%%%%%%%%%%%%%%%
Let $\alpha \in k^*$ be a representative of an element in $k^*/(k^*)^3$.
Then we may assume that $\alpha \in \bbz[\zeta_3]$ is a non-zero cube free integer by multiplication by an element of $(k^*)^3$.
Let $A = \prod_{i=1}^r q_i^{n_i}$ be a prime decomposition of $A$ in $k$, where $(q_i)$ are distinct prime ideals in $\bbz[\zeta_3]$ and $n_i \geq 1$.
The condition $\alpha \in S(A)$ implies that $\alpha$ is of the form
\[ \alpha = \zeta_3^m \prod_{i=1}^{r} q_i^{m_i}, \ m, m_1,\dots, m_r \in \{ 0,1,2 \}\]
by Lemma \ref{3.1}.
By multiplication by $A \in C(A) \subset S(A)$, we may assume that 
\[ \alpha = \zeta_3^m \prod_{i=1}^{r-1} q_i^{m_i}.\]
Conversely, if the above $\alpha$ satisfies $C_{A,\alpha}(k_q) \neq \emptyset$ for every prime $q \in \bbz[\zeta_3]$ dividing $3A$, then it follows from Lemma \ref{3.2} that $\alpha \in S(A)$.

 In the following we write $\lambda = 1 - \zeta_3$.
%%%%%%%%%%%%%%%%

\begin{lem}\label{lemmaa}
Let $p_1, p_2, p_3$ be distinct rational primes such that $(p_1, p_2,p_3 ) \equiv (2,2,5)$ or $(5,5,2) \bmod 9$.
If $A = p_1p_2p_3^2$, then $S(A) = \lan A, p_1p_2^2 \ran$.
\end{lem}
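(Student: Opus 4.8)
\emph{Setup and strategy.} The plan is to run through the local solubility conditions for $C_{A,\alpha}$ at the primes dividing $3A$ and then solve the resulting system. By the reduction made just before the statement, every class of $S(A)$ has a representative
\[ \alpha = \zeta_3^{m} p_1^{m_1} p_2^{m_2}, \qquad m,m_1,m_2 \in \{0,1,2\}, \]
(using $A \in C(A)$ to clear the $p_3$-part, and Lemma \ref{3.1} together with $\lambda \nmid A$ to exclude any other prime), and such an $\alpha$ lies in $S(A)$ iff $C_{A,\alpha}(k_q)\neq\emptyset$ for every $q \mid 3A$, i.e. for $q\in\{p_1,p_2,p_3,\lambda\}$ (Lemma \ref{3.2}). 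So I must analyse these four conditions; the outcome I expect is that they amount exactly to ``$m=0$ and $m_1+m_2\equiv 0 \bmod 3$'', which then gives $S(A)=\lan A, p_1p_2^2\ran$.

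\emph{The conditions at $p_1,p_2,p_3$.} By Lemma \ref{cubicresidue} (valid since $p_i\| A$ and $p_i\equiv 2\bmod 3$), the condition at $p_1$ is $\left(\frac{\zeta_3^{m}p_2^{m_2}}{p_1}\right)_3=1$, and symmetrically at $p_2$. For a rational prime $p\equiv 2\bmod 3$ the subgroup $\bbf_p^{*}\subseteq\bbf_{p^2}^{*}$ has order $p-1$ prime to $3$, hence lies in $(\bbf_{p^2}^{*})^3$; thus $\left(\frac{p_j}{p_i}\right)_3=1$ for $i\neq j$, and the conditions at $p_1,p_2$ reduce to $\left(\frac{\zeta_3}{p_1}\right)_3^{m}=\left(\frac{\zeta_3}{p_2}\right)_3^{m}=1$. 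By Eisenstein's supplement $\left(\frac{\zeta_3}{p}\right)_3=\zeta_3^{(p^2-1)/3}$, and $p\equiv 2$ or $5\bmod 9$ forces $p^2\not\equiv 1\bmod 9$, so $\left(\frac{\zeta_3}{p_i}\right)_3$ is a primitive cube root of unity; therefore $m=0$. For $p_3$ one needs the version of Lemma \ref{3.3} with $p_3^2\| A$ (a routine variant of the regular-model computation), giving $C_{A,\alpha}(k_{p_3})\neq\emptyset\Llrarrow\left(\frac{\alpha p_3^{-v_{p_3}(\alpha)}}{p_3}\right)_3=1$; once $m=0$ this reads $\left(\frac{p_1^{m_1}p_2^{m_2}}{p_3}\right)_3=1$, which holds automatically by the same fact. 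Hence $q\in\{p_1,p_2,p_3\}$ impose exactly $m=0$.

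\emph{The condition at $\lambda=1-\zeta_3$.} This is the main obstacle, since $E_A$ has bad reduction there. Under either hypothesis $A=p_1p_2p_3^2\equiv 1\bmod 9$, so $A=b^3$ for some $b\in\bbz_3^{*}$; substituting $z\mapsto z/b$ identifies $C_{A,\alpha}$ over $k_\lambda$ with the torsor $\alpha x^3+\alpha^{-1}y^3=z^3$ under the Fermat cubic. If $\alpha\equiv\pm1\bmod 9$ then $\alpha$ too is a cube in $\bbz_3$ and this curve has the point $(1,0,\alpha^{1/3})$, so $C_{A,\alpha}(k_\lambda)\neq\emptyset$. For the converse, the set $H_\lambda$ of classes $\beta\in k_\lambda^{*}/(k_\lambda^{*})^3$ for which $\beta x^3+\beta^{-1}y^3=z^3$ has a $k_\lambda$-point is a subgroup, and the crucial point is that $2\notin H_\lambda$, equivalently that $4x^3+y^3=2z^3$ has no primitive solution in $\gO_\lambda$: writing $3=-\zeta_3^2\lambda^2$ and using that the cube of any $\lambda$-unit is $\equiv\pm1\bmod\lambda^3$, one checks in each of the few cases for $\min(v_\lambda(x),v_\lambda(y),v_\lambda(z))=0$ that the left-hand side is $\equiv\pm3$ or $\pm6\bmod\lambda^3$, which has $\lambda$-valuation $2$ and so cannot vanish. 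Granting $2\notin H_\lambda$, the order-$3$ subgroup generated by the class of $2$ meets $H_\lambda$ trivially; since $\alpha=p_1^{m_1}p_2^{m_2}$ is a rational integer prime to $3$ whose class in $k_\lambda^{*}/(k_\lambda^{*})^3$ is that of $2^{m_1+m_2}$ (case $(2,2,5)$), resp. $5^{m_1+m_2}$ (case $(5,5,2)$), it lies in $H_\lambda$ only when this class is trivial, i.e. when $\alpha\equiv\pm1\bmod 9$, i.e. (as $2$ and $5$ have order $6$ in $(\bbz/9)^{*}$) when $3\mid m_1+m_2$.

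\emph{Conclusion.} Putting the four conditions together, the representatives in $S(A)$ are exactly $\alpha=p_1^{m_1}p_2^{m_2}$ with $m_1+m_2\equiv 0\bmod 3$, i.e. $\alpha\equiv(p_1p_2^2)^{m_1}\bmod(k^{*})^3$; with $A\in C(A)\subseteq S(A)$ this yields $S(A)=\lan A, p_1p_2^2\ran$. The two generators are independent modulo cubes (compare first $p_3$-exponents, then $p_1$-exponents), so $|S(A)|=9$, consistent with Lemma \ref{bf}. The case $(p_1,p_2,p_3)\equiv(5,5,2)$ runs identically, and everything is symmetric under $p_1\leftrightarrow p_2$. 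The genuinely delicate step is the $\lambda$-adic one, specifically the insolubility of $4x^3+y^3=2z^3$ over $\bbq_3(\zeta_3)$, because there $E_A$ does not have good reduction and the criterion is not visible on a smooth model.
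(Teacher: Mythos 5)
Your proof is correct, and it follows the paper's overall strategy (reduce to local solubility at $p_1,p_2,p_3,\lambda$; kill the $\zeta_3$-part via the cubic residue symbol at $p_1,p_2$; observe that the condition at $p_3$ is vacuous for rational $\alpha$), but your treatment of the place $\lambda$ above $3$ is genuinely different. The paper tests only the two classes $p_1p_2^2$ and $p_1$: since these torsors are defined over $\bbq$ and have period $3$, having a $k_\lambda$-point is equivalent to having a $\bbq_3$-point, so it exhibits a point on $C_{A,p_1p_2^2}$ from $p_1/p_2\in(\bbq_3^*)^3$, rules out $C_{A,p_1}$ by reducing $x^3+2y^3=5z^3$ modulo $9$, and lets the group structure of $S(A)$ do the rest. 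You instead determine the full local condition at $\lambda$ in one stroke: using $A\equiv 1\bmod 9$ you pass to torsors under the Fermat cubic, invoke the subgroup structure of the local solubility set $H_\lambda$, and reduce everything to the insolubility of $4x^3+y^3=2z^3$ over $k_\lambda$ itself, proved modulo $\lambda^3$. This is more structural and proves a slightly stronger local statement (insolubility over the ramified quadratic extension, not merely over $\bbq_3$), at the cost of one extra input; the paper's version is shorter because it only needs two test classes. Three small blemishes, none fatal: (1) the ``routine variant of Lemma \ref{3.3} for $q^2\,\|\,A$'' is not quite routine, since the naive model $\alpha^2x^3+y^3=\alpha Az^3$ fails to be regular at $(0,0,1)$ when $q^2\,\|\,A$; but the implication you actually use is immediate, because $\alpha$ a cube at $p_3$ gives $C_{A,\alpha}\cong E_A$, which contains $(1,-1,0)$. (2) In the $\lambda$-adic computation the nonzero values of $4a+b-2c$ with $a,b,c\in\{0,\pm1\}$ are not only $\pm3,\pm6$ (e.g.\ $(a,b,c)=(1,-1,1)$ gives $1$); what matters, and what is true, is that any such nonzero value is a rational integer of absolute value less than $9$, hence of $\lambda$-valuation at most $2<3$, while $4a+b-2c=0$ forces $a=b=c=0$, contradicting primitivity. (3) The subgroup property of $H_\lambda$ deserves a word: it is the kernel of $k_\lambda^*/(k_\lambda^*)^3\cong H^1(k_\lambda,E_1[\sqrt{-3}])\to H^1(k_\lambda,E_1)$, i.e.\ the image of the local descent map; alternatively you can avoid it entirely by noting $C_{1,\beta}\cong C_{1,\beta^{-1}}$, which reduces $4\notin H_\lambda$ to $2\notin H_\lambda$ directly.
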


\begin{proof}
By lemma \ref{cubicresidue}, the curve $C_{A,\zeta_3^m p_1^{m_1}p_2^{m_2}}$ has a $k_{p_1}$-rational point if and only if 
\[ \left( \frac{\zeta_3^m p_2^{m_2}}{p_1} \right) _3 =1.  \]
Since $p_1 \equiv 2 \bmod 3$, we have
\[ \left( \frac{\zeta_3^m p_2^{m_2}}{p_1} \right) _3 = \zeta_3^{m(p_1^2-1)/3},\]
and this is equal to $1$ if and only if $m=0$.
If $m=0$, then the same argument shows that $C_{A,p_1^{m_1}p_2^{m_2}}$ has a $k_{p_2}$-rational point.
Since $p_3 \equiv 2 \bmod 3$, the curve $C_{A,p_1^{m_1}p_2^{m_2}}$ is isomorphic to
\[ x^3 + y^3 = p_3^2z^3 \]
over $k_{p_3}$.
Hence $C_{A,p_1^{m_1}p_2^{m_2}}$ has a $k_{p_3}$-rational point.

We claim that $C_{A,p_1p_2^2}(k_\lambda) \neq \emptyset$ and $C_{A,p_1}(k_\lambda) = \emptyset$.
It follows from this that $S(A) = \lan A, p_1p_2^2 \ran$.
We note that since these cubic curves are defined over $\bbq$, they have a $k_\lambda$-rational point if and only if they have a $\bbq_3$-rational point. 

The curve $C_{A,p_1p_2^2}$ is isomorphic to
\[ p_2x^3 + p_1 y^3 = p_3^2z^3 \]
over $\bbq$.
Since $p_1 \equiv p_2 \bmod 9$ we see that $p_1/p_2 \in \bbz_3^*$ is a cube in $\bbq_3$, and
% and the fact
%\[ u \in \bbz_3^* \ \textrm{is a cube in}\ \bbq_3\  \textrm{if and only if}\ u \equiv \pm 1 \bmod 9 ,\]
the curve $C_{A,p_1p_2^2}$ has a $\bbq_3$-rational point.

The curve $C_{A,p_1}$ is isomorphic to
\[ x^3 + p_1 y^3 = p_2p_3^2z^3.\]
By modulo $9$ this equation becomes
\[ x^3 + 2 y^3 = 5z^3 \ (\textrm{resp.}\  x^3 + 5y^3 = 2z^3)\]
if $(p_1, p_2,p_3 ) \equiv (2,2,5) \bmod 9$ (resp. $(5,5,2) \bmod 9$).
It has no nontrivial solutions in $\bbz/9\bbz$.
Hence $C_{A,p_1}$ has no $\bbq_3$-rational points.
\end{proof}

\begin{lem}\label{lemmab}
Let $p_1,p_2,p_3$ be distinct rational primes such that $(p_1, p_2,p_3 ) \equiv (2,2,2)$ or $(5,5,5) \bmod 9$.
If $A = p_1^2p_2^2p_3^2$, then $S(A) = \lan A, p_1p_2^2 \ran$.
\end{lem}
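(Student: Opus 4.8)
The strategy parallels Lemma~\ref{lemmaa} very closely: starting from a cube free representative $\alpha = \zeta_3^m p_1^{m_1} p_2^{m_2}$ (after reducing modulo $A \in C(A)$), I would work out exactly at which of the primes $p_1, p_2, p_3, \lambda$ the torsor $C_{A,\alpha}$ is everywhere locally soluble, and show the solutions are precisely $\langle A, p_1 p_2^2\rangle$. The relevant primes are still $p_1, p_2, p_3$ (now each appearing to the second power in $A$) and $\lambda = 1 - \zeta_3$ above $3$; at every other prime $C_{A,\alpha}(k_q) \neq \emptyset$ automatically by Lemma~\ref{3.2}, and Lemma~\ref{3.1} forces the shape of $\alpha$ above.

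First I would treat the primes $p_i$ using Lemma~\ref{cubicresidue}, which applies since each $p_i \equiv 2, 5 \equiv 2 \bmod 3$ and $p_i \| A$ is false --- wait, here $p_i^2 \| A$, so I must instead invoke Lemma~\ref{3.3} directly in the cases $q^2 \| A$. The analogue of Lemma~\ref{cubicresidue} for $p^2 \| A$ (with $p \equiv 2 \bmod 3$, so $\bbz_p^* \subset (\bbq_p^*)^3$) gives, after absorbing powers of $p$, that $C_{A,\alpha}(k_{p_i}) \neq \emptyset \iff \left(\tfrac{\alpha\, p_i^{-v_{p_i}(\alpha)}}{p_i}\right)_3 = 1$, exactly as before. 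Applying this at $p_1$: the cubic residue symbol $\left(\tfrac{\zeta_3^m p_2^{m_2}}{p_1}\right)_3 = \zeta_3^{m(p_1^2-1)/3}$ is trivial iff $m = 0$ (using $p_1 \equiv 2 \bmod 3$ and $p_2 \equiv 2, 5 \bmod 9$, which kills the $p_2^{m_2}$ contribution modulo cubes in $\bbz_{p_1}^*$); then the symbol at $p_2$ is likewise automatically trivial once $m = 0$; and at $p_3$ the equation $C_{A,p_1^{m_1}p_2^{m_2}}$ becomes $x^3 + y^3 = p_3^2 z^3$ over $k_{p_3}$ (since $p_1, p_2 \in (\bbq_{p_3}^*)^3$), which has the point $(1,-1,0)$. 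So the $p_i$-conditions cut $S(A)$ down to $\langle A, p_1, p_2 \rangle / \langle A \rangle$, i.e. to the classes $p_1^{m_1} p_2^{m_2}$.

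The decisive step --- and the main obstacle --- is the computation at $\lambda \mid 3$. Since all these curves are defined over $\bbq$, solubility over $k_\lambda$ is equivalent to $\bbq_3$-solubility. I would show $C_{A, p_1 p_2^2}(\bbq_3) \neq \emptyset$ while $C_{A, p_1}(\bbq_3) = \emptyset$ (and by symmetry under $p_1 \leftrightarrow p_2$, also $C_{A,p_2}$, $C_{A, p_1^2 p_2}$, etc.\ get decided), leaving exactly the two-element image $\langle A, p_1 p_2^2 \rangle$. For the soluble case, $C_{A, p_1 p_2^2}$ is $\bbq$-isomorphic to $p_2 x^3 + p_1 y^3 = p_3^2 z^3$; since $p_1 \equiv p_2 \bmod 9$ we have $p_1/p_2 \in (\bbz_3^*)^3 \subset (\bbq_3^*)^3$, so after a cube change of variable this is $x^3 + y^3 = (p_3^2/p_2) z^3$ --- and I need $p_3^2/p_2$ (or equivalently the original equation) to represent a $\bbq_3$-point; here one checks directly that $x^3 + y^3 = c z^3$ with $c \in \bbz_3^*$ always has a $\bbq_3$-point because the curve has good reduction at $3$... no: $3 \mid \Delta$, so instead one exhibits an explicit $3$-adic point by Hensel's lemma starting from a solution mod $9$ (or mod $3^k$ for suitable $k$), using $p_1 \equiv p_2 \equiv p_3 \bmod 9$. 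For the insoluble case, $C_{A, p_1}$ is $\bbq$-isomorphic to $x^3 + p_1 y^3 = p_2 p_3^2 z^3$, which reduces mod $9$ to $x^3 + 2y^3 = 5 z^3$ (resp.\ $x^3 + 5y^3 = 5 z^3$, resp.\ the corresponding congruence) depending on $(p_1,p_2,p_3) \bmod 9$; the point is that every residue class of a cube mod $9$ lies in $\{0, \pm 1\}$, so this congruence has no nonzero solution in $\bbz/9\bbz$, whence no $\bbq_3$-point. The only genuinely new bookkeeping relative to Lemma~\ref{lemmaa} is checking that the mod-$9$ reductions really do obstruct in all the sub-cases $(2,2,2)$ and $(5,5,5)$, and that the $\bbq_3$-point on $C_{A,p_1p_2^2}$ persists; once those are verified, $S(A) = \langle A, p_1 p_2^2\rangle$ follows.
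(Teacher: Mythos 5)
Your overall strategy (reduce to candidates $\zeta_3^m p_1^{m_1}p_2^{m_2}$, kill $\zeta_3$ at an inert prime, put $p_1p_2^2$ in, and throw $p_1$ out at $3$) is the same as the paper's, and the unproved ``analogue of Lemma \ref{cubicresidue} for $p^2\|A$'' that you flag is in fact true and fillable. But there is a genuine error at the decisive step: you have carried over the models of $C_{A,\alpha}$ from Lemma \ref{lemmaa}, where $A=p_1p_2p_3^2$, instead of recomputing them for $A=p_1^2p_2^2p_3^2$. For $C_{A,p_1p_2^2}$ this is harmless (the correct model is $p_2x^3+p_1y^3=p_1p_2p_3^2z^3$, not $p_2x^3+p_1y^3=p_3^2z^3$, but either way $p_1/p_2\in(\bbq_3^*)^3$ gives a point with $z=0$ --- note also that your detour about whether $x^3+y^3=cz^3$ has a $\bbq_3$-point overlooks that $(1,-1,0)$ always lies on it). For $C_{A,p_1}$, however, the slip is fatal to the argument as written. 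The correct model is
\[ p_1^2x^3+y^3=p_2^2p_3^2z^3, \]
which reduces mod $9$ to $4x^3+y^3=7z^3$ (resp.\ $7x^3+y^3=4z^3$), and \emph{that} congruence has no primitive solution. Your model $x^3+p_1y^3=p_2p_3^2z^3$ reduces mod $9$ not to $x^3+2y^3=5z^3$ but to $x^3+2y^3=8z^3$ (resp.\ $x^3+5y^3=8z^3$), and these are not obstructed: $(x,y,z)=(1,-1,1)$ solves the first mod $9$, and $(5,0,1)$ even solves $x^3+5y^3=125z^3$ globally. So the mod-$9$ obstruction you are counting on simply does not exist for the equation you wrote down, and the exclusion $p_1\notin S(A)$ --- the whole point of the lemma --- is not established.

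The fix is exactly the recomputation you deferred as ``bookkeeping'': multiply $p_1x^3+p_1^{-1}y^3=p_1^2p_2^2p_3^2z^3$ through by $p_1$ to get $p_1^2x^3+y^3=p_2^2p_3^2z^3$ up to cubes, and then check the mod-$9$ insolubility of $4x^3+y^3=7z^3$ and $7x^3+y^3=4z^3$ (cubes mod $9$ lie in $\{0,\pm1\}$, and $4a+b+2c=0$ with $a,b,c\in\{0,\pm1\}$ forces $a=b=c=0$). With that substitution, and with the $p^2\|A$ local-solubility criterion actually proved (the paper instead avoids it by the ad hoc observation $\bbz_{p_i}^*\subset(\bbq_{p_i}^*)^3$ and by excluding $\zeta_3$ at $p_3$ directly), your argument becomes the paper's.
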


\begin{proof}
Since $p_3 \equiv 2, 5 \bmod 9$, the curve $C_{A, \zeta_3 p_1^{n_1}p_2^{n_2}}$ is isomorphic to the curve
\[ C : \zeta_3 x^3 + \zeta_3^2 y^3 = p_3^2 z^3 \]
over $k_{p_3}$.
If $C$ has a $k_{p_3}$-rational point, then we see that $\zeta_3$ is a cube in $k_{p_3}$.
On the other hand, by the definition of the cubic residue symbol, we have
\[ \left( \frac{\zeta_3}{p_3} \right)_3 = \zeta_3^{\frac{p_3^2-1}{3}} \neq 1. \]
This is a contradiction, therefore we have $\zeta_3 p_1^{n_1}p_2^{n_2} \not\in S(A)$. 

The curve $C_{A,p_1p_2^2}$ is isomorphic to the curve 
\[ p_2x^3 + p_1y^3 = p_1p_2p_3^2z^3. \]
Since $\bbz_{p_i}^* \subset (\bbq_{p_i}^*)^3$, we see that $C_{A,p_1p_2^2}(k_{p_i}) \neq \emptyset$ for $i= 1,2,3$.
Moreover $C_{A,p_1p_2^2}$ has a $\bbq_3$-rational point since $p_1/p_2 \in (\bbq_3^*)^3$.
%It is easy to see that $C_{A,p_1p_2^2} (\bba_k) \neq \emptyset$.
Hence we have $p_1p_2^2 \in S(A)$.

The curve $C_{A,p_1}$ is isomorphic to the curve
\[ p_1^2x^3 +  y^3 = p_2^2p_3^2z^3.\]
By modulo $9$, this equation becomes
\[ 4x^3 +  y^3 = 7z^3 \ (\textrm{resp.}\  7x^3 + y^3 = 4z^3)\]
if $(p_1, p_2,p_3 ) \equiv (2,2,2) \bmod 9$ (resp. $(5,5,5) \bmod 9$).
It has no nontrivial solutions in $\bbz/9\bbz$.
Therefore $C_{A,p_1}$ has no $\bbq_3$-rational points.
This completes the proof.
\end{proof}

\begin{rem}\normalfont
We give another proof of Lemma \ref{lemmab}.
To begin with, we introduce some notations.
For a cube free integer $A \in \bbz \setminus \{0, \pm1 \}$, we define
\[ s(A) = \dim_{\bbf_3}S(A) -1. \]
Let $m$ be the number of distinct prime factors $\equiv 2 \bmod 3$ of $A$.
Then we write
\[
s_0(A) = \begin{cases}
m & \textrm{if} \ A \equiv \pm3  \bmod 9, \\
m-1 & \textrm{if} \ A \equiv 0, \pm 2, \pm 4 \bmod 9, \\
m-2 & \textrm{if} \ A \equiv \pm 1 \bmod 9.
\end{cases}
\]

Heath-Brown pointed out in \cite[p. 247]{hb} that
\[ s(A) \equiv s_0(A) \bmod 2. \]
Indeed, 
%We denote by $R(A)$ the analytic rank of the rational elliptic curve $E_A$, that is, the order of vanishing of the associated $L$-finction $L(E_A / \bbq\ ; s)$ at the point $s=1$.
Stephens \cite{steph} showed that $R(A) \equiv s(A) \bmod 2$ and determined the  root number, where $R(A)$ is the order of vanishing of the $L$-function $L(E_A / \bbq\ ; s)$ at the point $s=1$.
The precise formula is
\[ (-1)^{R(A)} = -w_3 \prod_{p \neq 3} w_p, \]
where
\begin{align*}
w_3 = -1\ &\textrm{if}\  A \equiv \pm1, \pm 3 \bmod 9, \ w_3 = 1 \ \textrm{otherwise}, \\
w_p = -1\ &\textrm{if}\ p|A, p \equiv 2  \bmod 3, \ w_p = 1 \ \textrm{otherwise}.
\end{align*}

In order to show Lemma \ref{lemmab}, we first check $p_1p_2^2 \in S(A)$ and $p_1 \not\in S(A)$ in the same way as the proof of Lemma \ref{lemmab}.
Since $S(A)$ is a subgroup of the group generated by $A, \zeta_3, p_1, p_2$, and contains $A$ and $p_1p_2^2$, we have $1 \leq s(A) \leq 3$.
On the other hand, we have $s_0(A) = 3-2 = 1$ and $p_1 \not\in S(A)$.
Therefore we can conclude that $s(A) = 1$ and $S(A) = \lan A, p_1p_2^2 \ran$.
\end{rem}

Now we are ready to prove our main theorem.
\begin{thm}\label{mainthm}
Let $p_1, p_2, p_3$ be rational primes such that $p_1,p_2, p_3 \equiv 2, 5 \bmod 9$, and 
\[ V : x_1^3 + p_1p_2x_2^3 + p_2p_3x_3^3 + p_3p_1x_4^3 = 0 \]
 a cubic surface over $\bbq$.
Assume that the Tate-Shafarevich group of any elliptic curve over $\bbq$ is finite.
Then $V(\bbq) \neq \emptyset$.
\end{thm}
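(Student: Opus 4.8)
The plan is to run the descent method recalled in the introduction, reducing to two auxiliary genus-one curves and choosing the splitting constant $B$ with care.

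\textbf{Reduction to two curves.} It suffices to produce a nonzero integer $B$ for which both curves
\[
C_1\colon x_1^3 + p_1p_2\,x_2^3 = B x_0^3,
\qquad
C_2\colon p_2p_3\,x_3^3 + p_3p_1\,x_4^3 = B x_0^3
\]
have a $\bbq$-rational point: given $(x_1:x_2:x_0)\in C_1(\bbq)$ and $(x_3:x_4:x_0')\in C_2(\bbq)$ (which necessarily have $x_0,x_0'\neq 0$, for otherwise $-p_1p_2$ or $p_2/p_1$ would be a cube), the tuple $(x_0'x_1,\ x_0'x_2,\ -x_0x_3,\ -x_0x_4)$ is a nonzero point of $V(\bbq)$. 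Moreover the coefficient multiset $\{1,p_1p_2,p_2p_3,p_3p_1\}$ is invariant under the group $S_3$ of permutations of $(p_1,p_2,p_3)$, each realized by a permutation of $x_2,x_3,x_4$, so I may relabel the primes so that $p_1\equiv p_2\pmod 9$. This leaves the \emph{mixed} case $p_3\not\equiv p_1\pmod 9$, where $(p_1,p_2,p_3)\equiv(2,2,5)$ or $(5,5,2)\bmod 9$, and the \emph{pure} case $p_1\equiv p_2\equiv p_3\pmod 9$.

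\textbf{Making $C_1$ trivial and identifying $C_2$.} The key is to pick $B$ so that $C_1$ has an obvious point: $B=1$ in the mixed case gives $(1:0:1)\in C_1(\bbq)$, and $B=p_1p_2$ in the pure case gives $(0:1:1)\in C_1(\bbq)$. For $C_2$, a rescaling of the three coordinates shows that over $\bbq$ the curve $a_1x^3+a_2y^3=cz^3$ is $\bbq$-isomorphic to $C_{A,\alpha}$ whenever $A\equiv a_1a_2c$ and $\alpha\equiv a_1^2a_2\pmod{(\bbq^*)^3}$. With $(a_1,a_2)=(p_2p_3,p_3p_1)$ this gives $\alpha_2\equiv p_1p_2^2$ in both cases, while the cube-free representative $A_2$ of $a_1a_2B$ equals $p_1p_2p_3^2$ in the mixed case and $p_1^2p_2^2p_3^2$ in the pure case. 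Then Lemma~\ref{lemmaa} (mixed) and Lemma~\ref{lemmab} (pure) apply verbatim and give $S(A_2)=\lan A_2,\ p_1p_2^2\ran$; in particular $|S(A_2)|=9$ and $\alpha_2\in S(A_2)$. Since $\mathcyr{Sh}(E_{A_2}/\bbq)$ is assumed finite, Lemma~\ref{bf} yields $C(A_2)=S(A_2)$, so $\alpha_2\in C(A_2)$, i.e.\ $C_2\cong C_{A_2,\alpha_2}$ has a $k$-rational point.

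\textbf{Descending $C_2$ from $k$ to $\bbq$.} The smooth plane cubic $C_2$ is, over $\bbq$, a torsor under $E_{A_2}$; let $\xi\in H^1(\bbq,E_{A_2})$ be its class. On the one hand $C_{A_2,\alpha_2}$ becomes isomorphic to $E_{A_2}$ over $\bbq(\alpha_2^{1/3})$ (send $x\mapsto\alpha_2^{-1/3}x$, $y\mapsto\alpha_2^{1/3}y$), so $\xi$ is killed by the degree $[\bbq(\alpha_2^{1/3}):\bbq]\mid 3$ via restriction–corestriction, hence $3\xi=0$. On the other hand $C_2$ has a point over the quadratic field $k=\bbq(\zeta_3)$, so likewise $2\xi=0$. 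Therefore $\xi=0$ and $C_2(\bbq)\neq\emptyset$; combining with the point of $C_1(\bbq)$ as above produces a point of $V(\bbq)$.

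\textbf{The main obstacle.} The arithmetic content lies entirely in the earlier results — the Selmer-group computations of Lemmas~\ref{lemmaa} and~\ref{lemmab} and the descent Lemma~\ref{bf}. Granting those, the one genuinely delicate point is structural: choosing $B\in\{1,p_1p_2\}$ so that $C_1$ carries a visible rational point collapses the whole problem to knowledge of the single Selmer group $S(A_2)$, which is exactly what makes the argument work with the computations at hand (and why, as in Remark iv, only the finiteness of $\mathcyr{Sh}(E_{A_2}/\bbq)$ is really needed). The only other step requiring care is the passage from a $k$-point to a $\bbq$-point on $C_2$ via the coprimality of $2$ and $3$; the rest is bookkeeping with cube classes and with the $S_3$-symmetry of $V$.
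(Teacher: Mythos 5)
Your proposal is correct and follows essentially the same route as the paper: the same choices $B=1$ (mixed case) and $B=p_1p_2$ (pure case) reduce the problem to the curves $C_{p_1p_2p_3^2,\,p_1p_2^2}$ and $C_{p_1^2p_2^2p_3^2,\,p_1p_2^2}$, to which Lemmas \ref{lemmaa}, \ref{lemmab} and \ref{bf} apply. The only (immaterial) difference is the final descent from $k$ to $\bbq$, which the paper performs on the cubic surface $V$ itself while you perform it on the genus-one curve $C_2$ via restriction--corestriction; note also that both arguments need the $p_i$ to be distinct for the lemmas, the excluded case being trivial since $V$ then contains the point $(0,0,1,-1)$.
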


\begin{proof}
We may assume that $p_1, p_2, p_3$ are distinct.
If $(p_1, p_2,p_3 ) \equiv (2,2,5)$ or $(5,5,2) \bmod 9$, then the curve 
\[ C_{p_1p_2p_3^2, p_1p_2^2} \cong \{ p_2p_3x^3 + p_3p_1 y^3 = z^3 \} \]
has a $k$-rational point by Lemma \ref{bf} and Lemma \ref{lemmaa}.
If $(p_1, p_2,p_3 ) \equiv (2,2,2)$ or $(5,5,5) \bmod 9$,
then the curve 
\[ C_{p_1^2p_2^2p_3^2, p_1p_2^2} \cong \{ p_2p_3x^3 + p_3p_1 y^3 = p_1p_2z^3 \} \]
has a $k$-rational point by Lemma \ref{bf} and \ref{lemmab}.
In either case the cubic surface $V$ has a $k$-rational point,
therefore $V$ has a $\bbq$-rational point.
\end{proof}

\begin{rem}\normalfont\label{rem1}
We do not need to assume the finiteness of the Tate-Shafarevich groups of all elliptic curves. Indeed, it is sufficient to assume that $\mathcyr{Sh}(E_{p_1p_2p_3^2}/\bbq)$ $($resp. $\mathcyr{Sh}(E_{p_1^2p_2^2p_3^2}/\bbq)$ $)$ is finite if $(p_1,p_2,p_3) \equiv (2,2,5)$ or $(5,5,2) \bmod 9$ (resp. $(2,2,2)$ or $(5,5,5) \bmod 9$).
\end{rem}

\section{Corollary of Swinnerton-Dyer's theorem}

Let $V$ be the smooth cubic surface over $\bbq$ defined by 
\[ a_1 x_1^3 + a_2 x_2^3 = a_3 x_3^3 + a_4x_4^3, \ \ a_1, a_2, a_3, a_4 \in \bbq^*.\]
Without loss of generality we can assume that $a_i$ are non-zero cube free integers and that for any prime $p$ the $4$-tuple $(v_p(a_1), v_p(a_2), v_p(a_3), v_p(a_4))$ is one of the following $4$-tuples up to permutation;
\[ (0,0,0,0), (0,0,0,1), (0,0,0,2), (0,0,1,1), (0,0,1,2).\] 
We denote by $V(\bba_\bbq)$ the set of adelic points of $V$.
Since $V$ is projective, we have
\[ V(\bba_\bbq) = \prod_v V(\bbq_v), \]
where $v$ runs over all places of $\bbq$.

The following theorem was shown by Swinnerton-Dyer.

\begin{thm}[{\cite[Theorem 1]{sd}}]\label{main}
Assume that the Tate-Shafarevich group of any elliptic curve defined by 
\[ x^3 + y^3 = Az^3 \]
over any quadratic field is finite.
If $V(\bba_\bbq) \neq \emptyset$, then each of the following three criteria is sufficient for $V(\bbq) \neq \emptyset$.
\begin{itemize}
\item There are rational primes $p_1 , p_3$ not dividing $3$ such that $p_1 | a_1, p_1 \nmid a_2 a_3 a_4$, $p_3 | a_3, p_3 \nmid a_1 a_2 a_4$.

\item There is a rational prime $p$ not dividing $3$ such that 
$p |a_1, p \nmid a_2a_3a_4$, and $a_2, a_3, a_4$ are not all in the same coset of $(\bbq_p^*)^3$.

\item There is a rational prime $p$ not dividing $3$ such that exactly two of the $a_i$ are divided by $p$, and $V$ is not birationally equivalent to a plane over $\bbq_p$.
\end{itemize}
\end{thm}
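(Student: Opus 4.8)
The plan is to realise $V$ as glued from two pencils of genus-one curves and to locate, in each pencil, a single fibre carrying a rational point. For $B \in \bbq^*$ set
\[ C_1^{(B)} : a_1 x_1^3 + a_2 x_2^3 = B z^3, \qquad C_2^{(B)} : a_3 x_3^3 + a_4 x_4^3 = B z^3. \]
If for some common $B$ both $C_1^{(B)}$ and $C_2^{(B)}$ have $\bbq$-rational points, say $(x_1,x_2,z_1)$ and $(x_3,x_4,z_2)$, then rescaling the two solutions by rational factors so that the two copies of the $z$-coordinate agree yields $a_1x_1^3 + a_2x_2^3 = a_3x_3^3 + a_4x_4^3$, that is, a point of $V(\bbq)$. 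Thus the theorem reduces to producing such a $B$, which is exactly the Basile--Fisher strategy recalled in the Introduction.

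Each $C_i^{(B)}$ is, after base change to $k = \bbq(\zeta_3)$ and an evident change of variables, a torsor $C_{A_i,\alpha_i}$ under the curve $E_{A_i}$ of Section 1, where $A_i$ is the cube-free part of the product of the two relevant coefficients with $B$ and $\alpha_i$ encodes the torsor. By Lemma \ref{bf}, if $C_i^{(B)}$ is everywhere locally soluble and $|S(A_i)| = 9$, then its class lies in $C(A_i) = S(A_i)$, so it has a $k$-rational point; since $C_i^{(B)}$ is defined over $\bbq$ with index a power of $3$, while the existence of a $k$-point makes the index divide $[k:\bbq]=2$, the index is $1$ and $C_i^{(B)}$ has a $\bbq$-point. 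Hence it suffices to choose $B$ so that both fibres are everywhere locally soluble and both Selmer groups attain the minimal order $9$.

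First I would secure everywhere local solubility. Away from $3$, from the primes dividing $a_1a_2a_3a_4$, and from the primes dividing $B$, the curves $C_i^{(B)}$ have good reduction and hence local points by Lemma \ref{3.2}, so only the finitely many bad places must be controlled. The hypothesis $V(\bba_\bbq) \neq \emptyset$ furnishes, at each place $v$, a local point of $V$ and hence a local cube-class of a common admissible value $B_v$, simultaneously soluble on both fibres at $v$; a Dirichlet/Chebotarev argument then lets me take $B$ of the shape (controlled part)$\,\times q$ for a single auxiliary prime $q$ in a prescribed congruence and cubic-residue class, matching the required local conditions at every bad place at once.

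The crux is to choose that auxiliary prime $q$ so that, simultaneously with local solubility, the Selmer groups are pinned to order $9$. Using the cubic-residue computations of Lemmas \ref{3.1}--\ref{cubicresidue}, the memberships $\alpha \in S(A_i)$ become $\bbf_3$-linear constraints whose coefficients are the cubic-residue symbols $\left( \tfrac{\cdot}{q} \right)_3$ determined by the Frobenius of $q$ in a fixed abelian extension of $k$ generated by cube roots of the $a_j$ and of the relevant units. The three criteria are precisely the three configurations of the valuations $v_p(a_i)$ --- respectively two distinct primes each dividing a single coefficient; one prime dividing a single coefficient with the remaining coefficients not all in one coset of $(\bbq_p^*)^3$; and one prime dividing exactly two coefficients with $V$ not $\bbq_p$-rational --- under which this linear system has the right rank, so that a positive density of primes $q$ both fix the local data and cut each Selmer group down to order $9$. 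The main obstacle, and the step where finiteness of $\mathcyr{Sh}(E_{A_i}/k)$ over the quadratic field $k$ is indispensable, is exactly this coupling: one prime must serve two ends at once, and verifying the requisite nonvanishing of the governing $\bbf_3$-matrix for each of the three criteria --- equivalently, that the associated Cassels--Tate/local-symbol alternating form does not obstruct the descent --- is where the real work lies. Once such $q$ is chosen, Lemma \ref{bf} delivers $k$-points on both fibres; descent to $\bbq$ and the gluing of the first paragraph then complete the proof.
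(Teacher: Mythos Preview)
The paper does not supply its own proof of this theorem: it is quoted from \cite[Theorem~1]{sd}, and the only remark offered is that it is deduced from Theorem~\ref{key} (i.e., \cite[Theorem~3]{sd}). The intended deduction is that, under each of the three criteria, one exhibits primes $p_1,p_3$ and values $C_{p_1},C_{p_3}$ for which the auxiliary curves \eqref{eq2} and \eqref{eq3} fail to have local points, whereupon Theorem~\ref{key} applies directly. This is exactly the pattern the paper follows in proving its own Theorem~\ref{sum}.

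Your sketch takes a different route, and it has a real gap. You propose to choose an auxiliary prime $q$ so that both Selmer groups $S(A_1),S(A_2)$ drop to order $9$, and then invoke Lemma~\ref{bf}. First, Lemma~\ref{bf} as stated here assumes finiteness of $\mathcyr{Sh}(E_A/\bbq)$, not of $\mathcyr{Sh}$ over a quadratic field, so the hypothesis of the theorem does not plug into it as written. More seriously, the sentence ``the three criteria are precisely the three configurations \dots under which this linear system has the right rank, so that a positive density of primes $q$ \dots cut each Selmer group down to order $9$'' is the whole content of the theorem and is simply asserted. In fact this is \emph{not} the mechanism of \cite{sd}: Swinnerton-Dyer does not force the Selmer groups to be minimal. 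The point of Theorem~\ref{key} is that the failure of local solubility of the second-descent curves \eqref{eq2}, \eqref{eq3} at a single place makes the Cassels--Tate pairing nontrivial on the relevant Selmer element, which forces it into the image of the Mordell--Weil group regardless of how large $S(A_i)$ is. Your plan conflates the Basile--Fisher device (which works in the very special situations of Section~2 where $S(A)$ can be computed exactly) with Swinnerton-Dyer's argument; there is no reason to expect a single auxiliary prime to simultaneously collapse both Selmer groups to order $9$ under only the listed criteria, and you have not shown that it can.
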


Note that $V(\bba_\bbq) \neq \emptyset$ is equivalent to the following condition.
\begin{cond}\label{cond2}
For every rational prime $p$ there exists $C_p$ in $\bbq_p^*$ such that each of the two curves
\begin{equation}
a_1x_1^3 + a_2x_2^3 = C_p x_0^3, \ \ a_3x_3^3 + a_4x_4^3 = C_p x_0^3 \label{eq1}
\end{equation}
has a $\bbq_p$-rational point.
\end{cond}
Indeed, if the set $V(\bbq_p)$ of $\bbq_p$-rational points on $V$ is not empty, then it is dense in $V$.

Theorem \ref{main} is deduced from the following theorem.
\begin{thm}[{\cite[Theorem 3]{sd}}]\label{key}
Assume that the Tate-Shafarevich group of any elliptic curve
\[x^3 + y^3 =Az^3 \]
over any quadratic field is finite.
If we can choose the $C_p$ in Condition \ref{cond2} so that for some primes $p_1, p_3$ $($which may be the same$)$ the curve
\begin{equation}
a_3^2 x_1^3 + a_4^2 x_2^3 = a_1a_2a_3a_4C_{p_1}x_0^3 \label{eq2}
\end{equation}
has no $\bbq_{p_1}$-rational points, and the curve 
\begin{equation}
a_1^2 x_3^3 + a_2^2 x_4^3 = a_1a_2a_3a_4C_{p_3}x_0^3 \label{eq3}
\end{equation}
has no $\bbq_{p_3}$-rational points,
then $V(\bbq) \neq \emptyset$.
\end{thm}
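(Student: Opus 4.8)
The plan is to produce a single rational number $B \in \bbq^*$ for which both curves
\[ D_1 : a_1 x_1^3 + a_2 x_2^3 = B x_0^3, \qquad D_2 : a_3 x_3^3 + a_4 x_4^3 = B x_0^3 \]
acquire a $k$-rational point, where $k = \bbq(\zeta_3)$. Gluing such points along their common $x_0$-coordinate gives a $k$-rational point of $V$, and the third-intersection-point construction for a smooth cubic surface (the line through a $k$-point and its Galois conjugate is defined over $\bbq$ and meets $V$ in a third, necessarily $\bbq$-rational, point) descends this to a point of $V(\bbq)$. So everything reduces to finding one good $B$.

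The engine for producing $k$-points is Lemma \ref{bf}. Each $D_i$ is a torsor under the CM curve $E_{A_i}$ with $A_1 = a_1 a_2 B$ and $A_2 = a_3 a_4 B$ (up to cubes), and, as in Section 1, it represents a class in the $\sqrt{-3}$-Selmer group $S(A_i) \subset k^*/(k^*)^3$ as soon as it is everywhere locally soluble. Since the $A_i$ are rational integers, the hypothesis supplies finiteness of $\mathcyr{Sh}(E_{A_i}/\bbq)$, so Lemma \ref{bf} applies: once $|S(A_i)| = 9$ we have $C(A_i) = S(A_i)$, and every everywhere-locally-soluble torsor class — in particular that of $D_i$ — lies in $C(A_i)$, whence $D_i(k) \neq \emptyset$. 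Thus the whole problem reduces to choosing $B$ so that (a) both $D_1, D_2$ are soluble over every $\bbq_v$, and (b) both Selmer groups have order exactly $9$.

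First I would arrange (a) using Condition \ref{cond2}, which provides at each place a local cube class $C_p$ making both local curves soluble, these being $C_{p_1}, C_{p_3}$ at the two distinguished primes. I would choose $B$ by Dirichlet's theorem to be an integer supported on the primes dividing $3a_1a_2a_3a_4$ times a single auxiliary prime $\ell$, with $B \equiv C_p \pmod{(\bbq_p^*)^3}$ at each $p \mid 3a_1a_2a_3a_4$ (and at $p_1, p_3$) and with $\ell$ in a prescribed Chebotarev class. Local solubility of $D_1, D_2$ at the bad primes then holds by construction; at the primes of good reduction it is automatic (Lemma \ref{3.2}), and at $\ell$ it reduces to a cubic-residue condition (Lemmas \ref{3.3}, \ref{cubicresidue}) guaranteed by the choice of $\ell$. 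For (b), the a-priori bound places $S(A_i)$ inside the group generated by $\zeta_3$, $A_i$ and the primes dividing $A_i$, and the cubic-residue conditions of Lemmas \ref{3.1}, \ref{3.3} cut it down. The crucial input is the hypothesis that \eqref{eq2} (resp. \eqref{eq3}) has no $\bbq_{p_1}$-point (resp. $\bbq_{p_3}$-point). The global form of that curve, with $B$ in place of $C_{p_1}$ (resp. $C_{p_3}$), is a torsor under the \emph{same} $E_{A_1}$ (resp. $E_{A_2}$) — its coefficients multiply to $a_1a_2B \equiv A_1$ (resp. $a_3a_4B \equiv A_2$) modulo cubes — and it remains insoluble at $p_1$ (resp. $p_3$); this shows that the local condition there is nontrivial, cutting $\dim_{\bbf_3} S(A_i)$ by one. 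Combined with the membership of $[D_i]$ and of $A_i$ in $S(A_i)$ and with the parity constraint $R(A_i) \equiv s(A_i) \bmod 2$ recorded in the remark above, this pins $|S(A_i)|$ at its minimal value $9$.

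The hard part will be carrying out (b) uniformly in the general coefficients $a_i$: one must verify that the single global $B$ produced in (a) simultaneously keeps both $D_1$ and $D_2$ everywhere locally soluble and forces both $S(A_1)$ and $S(A_2)$ down to order $9$, while checking that the auxiliary prime $\ell$ dividing $B$ contributes no new class to either Selmer group. These demands compete, and reconciling them is exactly where the freedom in the local choices $C_p$ at the finitely many bad primes and in the Chebotarev class of $\ell$ must be spent. The linchpin is that replacing $C_{p_i}$ by the global $B$ preserves the insolubility of \eqref{eq2}, \eqref{eq3} — this uses only $B \bmod (\bbq_{p_i}^*)^3$, which equals the cube class of $C_{p_i}$ — so that the Selmer bound survives the globalization.
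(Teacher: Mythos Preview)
The paper does not prove Theorem~\ref{key}; it is quoted from \cite[Theorem~3]{sd} and used as a black box in the proof of Theorem~\ref{sum}. So there is nothing in this paper to compare your argument against, only Swinnerton-Dyer's original.

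Your overall architecture---manufacture a global $B$, check local solubility of both $D_1$ and $D_2$, control the $\sqrt{-3}$-Selmer groups $S(a_1a_2B)$ and $S(a_3a_4B)$, and then invoke Lemma~\ref{bf}---is indeed Swinnerton-Dyer's. The gap is in the step where you claim to ``pin $|S(A_i)|$ at its minimal value~$9$''. The a~priori container for $S(A_i)$ is the subgroup of $k^*/(k^*)^3$ generated by $\zeta_3$ and the primes of $\bbz[\zeta_3]$ dividing $A_i$; with $B$ supported on all primes of $3a_1a_2a_3a_4$ together with an auxiliary $\ell$, this ambient group can have $\bbf_3$-dimension much larger than~$3$. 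A single locally insoluble torsor at $p_1$ shows only that the local condition there is proper, cutting the Selmer dimension by at most one, and the parity relation $s(A_i)\equiv s_0(A_i)\bmod 2$ fixes $\dim_{\bbf_3}S(A_i)$ only modulo~$2$ (and you have not computed $s_0(A_i)$ in any case). These two ingredients together cannot force $\dim_{\bbf_3}S(A_i)=2$ unless you already know $\dim_{\bbf_3}S(A_i)\le 3$, which in general you do not. You flag this yourself as ``the hard part'', but it is not residual bookkeeping: the mechanism you propose is too weak in principle.

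This is exactly where the real content of \cite{sd} lies. Swinnerton-Dyer does not choose one $B$ and hope; he builds $B$ iteratively, adjoining auxiliary primes one at a time, each selected in a Chebotarev class so that (via cubic reciprocity) it annihilates a specific unwanted Selmer class while preserving the local solubility of $D_1,D_2$ and not re-inflating the other Selmer group. The hypotheses on \eqref{eq2} and \eqref{eq3} are precisely what guarantee that this simultaneous pruning can terminate. The Cassels--Tate pairing over $k=\bbq(\zeta_3)$---hence the finiteness of $\mathcyr{Sh}$ over that quadratic field, which is why the hypothesis is stated that way---is used to ensure the relevant Selmer quotients have square order, so the process ends at $9$ rather than stalling at $27$. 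Your sketch has the right destination but is missing this iterative Chebotarev construction, which is the heart of the proof.
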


\begin{rem}\normalfont
In fact, Swinnerton-Dyer \cite{sd} proved the above theorems in the case where, instead of $\bbq$, the base field is any number field not containing the primitive cube roots of unity.
\end{rem}

We give an analog of Theorem \ref{main}, which deals with the prime $3$.
%The result follows from the easy but tedious application of Theorem \ref{key}.

\begin{thm}\label{sum}
Assume that the Tate-Shafarevich group of any elliptic curve defined by 
\[ x^3 + y^3 = Az^3 \]
over any quadratic field is finite.
If $V(\bba_\bbq) \neq \emptyset$, then each of the following criteria is sufficient for $V(\bbq) \neq \emptyset$.
\begin{enumerate}
\item $3 | a_1$, $3 \nmid a_2a_3a_4$, and at least two of $a_2, a_3, a_4$ are in the same coset of $(\bbq_3^{*})^3$.
Moreover there is a rational prime $p$ not dividing $3$ such that $p | a_3$ and $p \nmid a_1a_2a_4$.

\item $3 \| a_1$, $3 \nmid a_2a_3a_4$, and $a_2, a_3, a_4$ are not all in the same coset of $(\bbq_3^*)^3$.
Moreover there is a rational prime $p$ not dividing $3$ such that $p | a_1$ and $p \nmid a_2a_3a_4$.

\item $3^2 \| a_1$, $3\nmid a_2 a_3 a_4$, and exactly two of $a_2, a_3, a_4$ are in the same coset of $(\bbq_3^*)^3$.

\item $3 \| a_1$, $3 \| a_3$, $3 \nmid a_2a_4$, and $V$ is not birationally equivalent to a plane over $\bbq_3$. 
Furthermore, $a_1/a_3 \in (\bbq_3^*)^3$.

\item $3^2 \| a_1$, $3 \| a_3$, $3 \nmid a_2a_4$, and $V$ is not birationally equivalent to a plane over $\bbq_3$.
Moreover there is a rational prime $p$ such that $p$ divides exactly one of $a_1, a_2$, and $p \nmid a_3a_4$.

\item $3 \nmid a_1a_2a_3a_4$ and $V$ is not birationally equivalent to a plane over $\bbq_3$.
Then we may assume that $a_1/a_2, a_2/a_3 \in (\bbq_3^*)^3$, $a_3/a_4 \not\in (\bbq_3^*)^3$.
Moreover there is a rational prime $p$ such that $p | a_1$ and $p \nmid a_2a_3a_4$.
\end{enumerate}
\end{thm}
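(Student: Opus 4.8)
The plan is to deduce Theorem \ref{sum} from Swinnerton-Dyer's Theorem \ref{key} by exactly the route used for Theorem \ref{main}, the one new ingredient being that we allow the primes $p_1$ or $p_3$ of Theorem \ref{key} to be the prime $3$. Since $V(\bba_\bbq) \neq \emptyset$, Condition \ref{cond2} holds, so we are handed a family $(C_p)_p$ with $C_p \in \bbq_p^*$ for which both curves $a_1 x_1^3 + a_2 x_2^3 = C_p x_0^3$ and $a_3 x_3^3 + a_4 x_4^3 = C_p x_0^3$ acquire a $\bbq_p$-rational point. By Theorem \ref{key} it then suffices, in each of the six cases, to designate primes $p_1, p_3$ (which in cases (3) and (4) are both $3$, and in cases (1), (2), (5), (6) are $3$ and the auxiliary prime $p$ appearing in the statement) and to replace $C_{p_1}$ and $C_{p_3}$ by classes in $\bbq_{p_1}^*/(\bbq_{p_1}^*)^3$, respectively $\bbq_{p_3}^*/(\bbq_{p_3}^*)^3$, that still keep Condition \ref{cond2} satisfied at those primes while making the curve \eqref{eq2} have no $\bbq_{p_1}$-rational point and the curve \eqref{eq3} have no $\bbq_{p_3}$-rational point. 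Since we change the $C_p$ only at $p \in \{p_1, p_3\}$ and retain the given choices elsewhere, the modified family still satisfies Condition \ref{cond2} everywhere, and Theorem \ref{key} then yields $V(\bbq) \neq \emptyset$.

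The substance of the proof is therefore purely local. At an auxiliary prime $p \nmid 3$ (necessarily $p \equiv 2 \bmod 3$, since $p$ divides some $a_i$ exactly once) one has $\bbz_p^* \subset (\bbq_p^*)^3$, so solubility over $\bbq_p$ of a diagonal cubic $\beta_1 u^3 + \beta_2 v^3 = \beta_0 w^3$ depends only on $(v_p(\beta_1), v_p(\beta_2), v_p(\beta_0)) \bmod 3$; the analysis at $p$ is then routine and runs parallel to Swinnerton-Dyer's treatment in \cite{sd}. At $p = 3$ I would first tabulate, in terms of the valuations $v_3(a_i)$ and the classes of the unit parts of the $a_i$ in $\bbz_3^*/(\bbz_3^*)^3$ (equivalently, residues modulo $9$), exactly when a diagonal cubic over $\bbq_3$ has a $\bbq_3$-rational point; only the valuation patterns admitted by the statement ($3 \nmid a_i$; $3 \| a_1$; $3^2 \| a_1$; $3 \| a_1$, $3 \| a_3$; $3^2 \| a_1$, $3 \| a_3$) need be considered, so this is a finite check (of the $x^3 + 2 y^3 = 4 z^3$ and $x^3 + y^3 = 9 z^3$ type, decided by congruences modulo $9$ or $27$). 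Using this tabulation I would, in each criterion, compute the set of admissible classes for $C_3$ — those for which both $a_1 x_1^3 + a_2 x_2^3 = C_3 x_0^3$ and $a_3 x_3^3 + a_4 x_4^3 = C_3 x_0^3$ are $\bbq_3$-soluble — and verify that this set contains a class making $a_3^2 x_1^3 + a_4^2 x_2^3 = a_1 a_2 a_3 a_4 C_3 x_0^3$ (when $p_1 = 3$), respectively the analogue \eqref{eq3} (when $p_3 = 3$), insoluble over $\bbq_3$. In cases (4), (5), (6) the hypothesis that $V$ is not $\bbq_3$-birational to a plane enters through \cite[Proposition 2]{ct}: it forces none of $a_1 a_2/a_3 a_4$, $a_1 a_3/a_2 a_4$, $a_1 a_4/a_2 a_3$ to be a cube in $\bbq_3$, which constrains the classes of the $a_i$ enough for the desired insolubility; the remaining side conditions — the two-coset conditions in cases (1) and (3), the condition $a_1/a_3 \in (\bbq_3^*)^3$ in case (4), and the existence of the auxiliary prime $p$ in cases (1), (2), (5), (6) — are precisely what guarantees that the admissible set for $C_3$ (or $C_p$) is simultaneously nonempty and still contains a class producing local insolubility of the relevant obstruction curve.

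I expect the main obstacle to be this $\bbq_3$-solubility bookkeeping. Unlike at primes $p \equiv 2 \bmod 3$, where solubility of a diagonal cubic reduces to a balance of valuations, over $\bbq_3$ the group $\bbq_3^*/(\bbq_3^*)^3$ has order $9$ and diagonal cubics may be soluble for congruence reasons modulo $9$ or $27$, so one must run, for each valuation pattern and each of the six criteria, a short but genuinely case-dependent list of congruence verifications and then match the outcomes against the obstruction curves \eqref{eq2} and \eqref{eq3}. A secondary point demanding care is that replacing $C_{p_1}$ or $C_{p_3}$ must not break Condition \ref{cond2} at those very primes; this is exactly why the criteria are formulated so that the set of admissible classes for $C_{p_i}$ is at once nonempty and large enough to contain a "bad" class.
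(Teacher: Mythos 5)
Your plan is exactly the paper's proof: for each criterion one designates $p_1,p_3$ (with the same assignments you propose --- both equal to $3$ in cases (3) and (4), and $3$ together with the auxiliary prime $p$ in the others), chooses $C_3$ and $C_p$ to be one of the $a_i$, checks local solubility of the two curves \eqref{eq1} via the tabulation of $\bbq_3$-solubility of diagonal cubics and via trivial points or good reduction at $p$, and checks insolubility of \eqref{eq2}, \eqref{eq3} by valuation balance at $p$ and by the mod-$9$ table together with \cite[Lemme 8]{ct} at $3$; the content you defer as ``routine'' is precisely the finite case analysis the paper writes out, and it goes through as you describe. One small correction: the auxiliary prime $p$ need not be $\equiv 2 \bmod 3$ (nothing in the criteria forces this), so solubility at $p$ does \emph{not} in general reduce to valuations alone --- fortunately the checks actually required at $p$ use only trivial points, good reduction, and insolubility from pairwise distinct valuations mod $3$, all of which are valid for every $p \neq 3$.
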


\begin{proof}
What we have to do is to find $C_{p_1}$ and $C_{p_3}$ satisfying the condition of Theorem \ref{key}.

We first remark when the smooth cubic curve
\[ C : ax^3 +by^3 = cz^3 \]
over $\bbq_3$
has a $\bbq_3$-rational point.
The group $\bbq_3^*/(\bbq_3^*)^3$ is generated by the images of $3$ and $2$.
Thus $C$ is isomorphic over $\bbq_3$ to one of the following curves
\[ 3^kx^3 + 2^iy^3 = 2^jz^3, \ \ x^3 + 3b'y^3 = 9c'z^3,\]
where $i,j,k \in \{ 0,1,2 \}$, and $b', c'$ are $3$-adic units.
The latter has no $\bbq_3$-rational points.
In the former case it has a $\bbq_3$-rational point if and only if
\[ k=0,  \{i,j \} \neq \{ 1,2 \}, \ \textrm{or}\  k= 1, \ \textrm{or}\ k=2, i=j.\]

i) 
Put $p_1 = p$ and $C_p = a_4$.
Then each of the two curves (\ref{eq1})
\[ a_1x_1^3 + a_2x_2^3 = a_4 x_0^3, \ \ a_3x_3^3 + a_4x_4^3 = a_4 x_0^3 \]
has a $\bbq_p$-rational point.
Indeed, since $p \nmid a_1a_2a_4$, the first curve does in the same way as Lemma \ref{3.2}.
The second curve has a point $(x_3,x_4,x_0 ) = (0,1,1)$. 
The curve (\ref{eq2}) $a_3^2 x_1^3 + a_4^2 x_2^3 = a_1a_2a_3a_4^2x_0^3$
has no $\bbq_p$-rational points.
This follows from $p | a_3$ and the same argument as in the proof of Lemma \ref{3.1}
 
Put $p_3 = 3$ and $C_3 = a_2$.
Then each of the two curves $($\ref{eq1}$)$
\[ a_1x_1^3 + a_2x_2^3 = a_2 x_0^3, \ \ a_3x_3^3 + a_4x_4^3 = a_2 x_0^3 \]
has a $\bbq_3$-rational point.
The first curve has the point $(x_1, x_2, x_0) = (0,1,1)$.
The second curve has a $\bbq_3$-rational point since at least two of $a_2, a_3, a_4$ are in the same coset of $(\bbq_3^*)^3$.
Moreover, it follows from $3 | a_1$ and $3 \nmid a_2a_3a_4$ that the curve (\ref{eq3}) $ a_1^2 x_3^3 + a_2^2 x_4^3 = a_1a_2^2a_3a_4x_0^3$
has no $\bbq_3$-rational points.

ii)
By permuting indexes if necessary, we may assume that $a_3/a_4 \not \in (\bbq_3^*)^3$.
Put $p_1 = 3$ and $C_3 = a_1$.
Then each of the two curves (\ref{eq1})
\[ a_1x_1^3 + a_2x_2^3 = a_1 x_0^3, \ \ a_3x_3^3 + a_4x_4^3 = a_1 x_0^3 \]
has a $\bbq_3$-rational point since $3 \| a_1$ and $3 \nmid a_3a_4$.
The curve (\ref{eq2}) $a_3^2 x_1^3 + a_4^2 x_2^3 = a_1^2a_2a_3a_4x_0^3$ does not have a $\bbq_3$-rational point.
Indeed, if it does, then $a_3/a_4 \in (\bbq_3^*)^3$.
This is a contradiction.

Put $v_3 = p$ and $C_p = a_2$.
Then each of the two curves $($\ref{eq1}$)$
\[ a_1x_1^3 + a_2x_2^3 = a_2 x_0^3, \ \ a_3x_3^3 + a_4x_4^3 = a_2 x_0^3 \]
has a $\bbq_p$-rational point since $p \nmid a_2a_3a_4$.
The curve (\ref{eq3}) $ a_1^2 x_3^3 + a_2^2 x_4^3 = a_1a_2^2a_3a_4x_0^3$
has no $\bbq_p$-rational points since $p |a_1$ and $p \nmid a_2a_3a_4$.

iii)
By permuting the indexes if necessary, we may assume that $a_2/a_3 \in (\bbq_3^*)^3$ and $a_3/a_4 \not\in (\bbq_3^*)^3$.
Put $p_1 = p_3 = 3$ and $C_3 = a_2$.
Each of the two curves (\ref{eq1})
\[ a_1x_1^3 + a_2x_2^3 = a_2x_0^3, \ \ a_3x_3^3 + a_4x_4^3 = a_2 x_0^3 \]
has a $\bbq_3$-rational point.
That is obvious for the first curve.
Since $a_2/a_3 \in (\bbq_3^*)^3$, the second curve is isomorphic to $x_3^3 + (a_4/a_3)x_4^4 = x_0^3$ over $\bbq_3$.
This has the point $(x_3, x_4, x_0 )= (1,0,1)$.
The curves (\ref{eq2}), (\ref{eq3}) 
\[ a_3^2 x_1^3 + a_4^2 x_2^3 = a_1a_2^2a_3a_4x_0^3,\ \ a_1^2 x_3^3 + a_2^2 x_4^3 = a_1a_2^2a_3a_4x_0^3 \]
have no $\bbq_3$-rational points, because $3^2 \| a_1$, $3 \nmid a_2a_3a_4$, and $a_3/a_4 \not\in (\bbq_3^*)^3$.

iv)
We recall that $V$ is birationally equivalent to a plane over $\bbq_3$ if and only if one of the ratios $a_1a_2/a_3a_4, a_1a_3/a_2a_4, a_1a_4/a_2a_3$ is in $(\bbq_3^*)^3$ \cite[Lemme 8]{ct}.
Therefore in this case we have $a_2/a_4 \not \in (\bbq_3^*)^3$.
Put $p_1 = p_3 = 3$ and $C_3 = a_1$, then both of the two curves $($\ref{eq1}$)$
\[ a_1x_1^3 + a_2x_2^3 = a_1 x_0^3, \ \ a_3x_3^3 + a_4x_4^3 = a_1 x_0^3 \]
have a $\bbq_3$-rational point since $a_1/a_3 \in (\bbq_3^*)^3$.
Neither of the curves (\ref{eq2}) and (\ref{eq3})
\[ a_3^2 x_1^3 + a_4^2 x_2^3 = a_1^2a_2a_3a_4x_0^3, \ \ a_1^2 x_3^3 + a_2^2 x_4^3 = a_1^2a_2a_3a_4x_0^3 \]
has a $\bbq_3$-rational point.
Indeed, since $3 \| a_1$, $3\| a_3$, and $a_1/a_3 \in (\bbq_3^*)^3$, each of the curves (\ref{eq2}) and (\ref{eq3}) has a $\bbq_3$-rational point if and only if $a_2/a_4 \in (\bbq_3^*)^3$.

v)
Put $p_1 =3$ and $C_3 = a_2$.
Then the two curves (\ref{eq1})
\[a_1x_1^3 + a_2x_2^3 = a_2x_0^3, \ \ a_3x_3^3 + a_4x_4^3 = a_2x_0^3 \]
have a $\bbq_3$-rational point since $3 \| a_3, 3\nmid a_2a_4$.
The curve (\ref{eq2}) $a_3^2 x_1^3 + a_4^2 x_2^3 = a_1a_2^2a_3a_4x_0^3 $ has no $\bbq_3$-rational points.
To see this, note that the curve (\ref{eq2}) has a $\bbq_3$-rational point if and only if $a_1a_2^2a_3a_4/a_4^2 \in (\bbq_3^*)^3$, and this is equivalent to $a_1a_3/a_2a_4 \in (\bbq_3^*)^3$.
However, this contradicts the assumption that $V$ is birationally equivalent to a plane over $\bbq_3$ by \cite[Lemme 8]{ct}.

Put $p_3 = p$ and 
\[
C_p = 
\begin{cases}
a_2 & \textrm{if} \ p | a_1, \\
a_1 & \textrm{if} \ p | a_2.
\end{cases}
\]
Then the two curves $($\ref{eq1}$)$
\[ a_1x_1^3 + a_2x_2^3 = C_p x_0^3, \ \ a_3x_3^3 + a_4x_4^3 = C_p x_0^3 \]
have a $\bbq_p$-rational point.
The first curve does since $C_p$ is equal to $a_1$ or $a_2$.
The second also does, because $p \nmid a_3 a_4 C_p$.
Moreover the curve (\ref{eq3})
\[a_1^2 x_3^3 + a_2^2 x_4^3 = a_1a_2a_3a_4C_px_0^3 \]
has no $\bbq_p$-rational points 
since the triple $(v_p(a_1^2), v_p(a_2^2),v_p(a_1a_2a_3a_4C_p))$ is congruent to $(0,1,2)$ modulo $3$ up to permutation.

vi)We may assume that the $4$-tuple $(a_1, a_2, a_3, a_4)$ is congruent to
\[ (2^i, 2^i, 2^j, 2^k) \]
modulo $(\bbq_3^*)^3$, where $(i,j,k) = (0,0,0), (0,0,1), (0,0,2), (0,1,1), (0,1,2)$.
Since $V$ is not birationally equivalent to a plane over $\bbq_3$,
we have $(i,j,k) = (0,0,1), (0,0,2)$ by \cite[Lemme 8]{ct}.
%permuting the indexes if necessary, we can assume that $a_1/a_2, a_2/a_3 \in (\bbq_3^*)^3$, $a_3/a_4 \not\in (\bbq_3^*)^3$ by \cite[Lemme 8]{ct}.
Put $p_1 = 3$ and $C_3 = a_1$.
Then the two curves (\ref{eq1})
\[ a_1x_1^3 + a_2x_2^3 =  a_1x_0^3, \ \ a_3x_3^3 + a_4x_4^3 = a_1 x_0^3 \]
have a $\bbq_3$-rational point since $a_1/a_3 \in (\bbq_3^*)^3$.
The curve (\ref{eq2}) $a_3^2 x_1^3 + a_4^2 x_2^3 = a_1^2a_2a_3a_4x_0^3$
has no $\bbq_3$-rational points since $(a_3^2, a_4^2, a_1^2a_2a_3a_4)$ is congruent to $(2^0, 2^1,2^2)$ modulo $(\bbq_3^*)^3$ up to permutation.

Put $p_3 = p$ and $C_p = a_2$.
Each of the two curves (\ref{eq1})
\[ a_1x_1^3 + a_2x_2^3 = a_2 x_0^3, \ \ a_3x_3^3 + a_4x_4^3 = a_2 x_0^3 \]
has a $\bbq_p$-rational point since $p \nmid a_2a_3a_4$.
Moreover the curve (\ref{eq3}) $ a_1^2 x_3^3 + a_2^2 x_4^3 = a_1a_2^2a_3a_4x_0^3$
has no $\bbq_p$-rational points since $p | a_1$.
This completes the proof.
\end{proof}

\begin{rem}\normalfont\label{rem34}
We cannot apply Theorem \ref{main} or Theorem \ref{key} (hence Theorem \ref{sum}) to Theorem \ref{mainthm}.
Indeed, in Theorem \ref{mainthm}, we can easily see that there does not exist the $C_p$ satisfying the condition of Theorem \ref{key}.
%Similarly in Theorem \ref{mthm2}, if $(p_1, p_2, p_3 ) \equiv (2,8,2) \bmod 9$ and $p_2, p_3 \in (\bbq_p^*)^3$, then there does not exist the $C_p$ satisfying the condition of Theorem \ref{key}.
\end{rem}

\begin{ack*}\normalfont
I would like to thank my supervisor, Professor Masaki Hanamura, for helpful comments and warm encouragement.
This work was supported by JSPS KAKENHI Grant Number $25 \cdot 1135$.
\end{ack*}

\begin{bibdiv}
\begin{biblist}

%\bib{at}{book}{
%   author={Artin, E.},
%   author={Tate, J.},
%   title={Class field theory},
%   publisher={W. A. Benjamin, Inc., New York-Amsterdam},
%   date={1968},
%   pages={xxvi+259},
%   
%}

\bib{bf}{article}{
   author={Basile, Carmen Laura},
   author={Fisher, Thomas Anthony},
   title={Diagonal cubic equations in four variables with prime
   coefficients},
   conference={
      title={Rational points on algebraic varieties},
   },
   book={
      series={Progr. Math.},
      volume={199},
      publisher={Birkh\"auser},
      place={Basel},
   },
   date={2001},
   pages={1--12},
  
}

\bib{ca}{book}{
   author={Cassels, J. W. S.},
   title={Lectures on elliptic curves},
   series={London Mathematical Society Student Texts},
   volume={24},
   publisher={Cambridge University Press, Cambridge},
   date={1991},
   pages={vi+137},

}

\bib{cg}{article}{
   author={Cassels, J. W. S.},
   author={Guy, M. J. T.},
   title={On the Hasse principle for cubic surfaces},
   journal={Mathematika},
   volume={13},
   date={1966},
   pages={111--120},
  ,
}

\bib{ct}{article}{
   author={Colliot-Th{\'e}l{\`e}ne, Jean-Louis},
   author={Kanevsky, Dimitri},
   author={Sansuc, Jean-Jacques},
   title={Arithm\'etique des surfaces cubiques diagonales},
   language={French},
   conference={
      title={Diophantine approximation and transcendence theory},
      address={Bonn},
      date={1985},
   },
   book={
      series={Lecture Notes in Math.},
      volume={1290},
      publisher={Springer},
      place={Berlin},
   },
   date={1987},
   pages={1--108},
  
}

%
%\bib{fisher}{article}{
%   author={Fisher, Tom},
%   title={A counterexample to a conjecture of Selmer},
%   conference={
%      title={Number theory and algebraic geometry},
%   },
%   book={
%      series={London Math. Soc. Lecture Note Ser.},
%      volume={303},
%      publisher={Cambridge Univ. Press, Cambridge},
%   },
%   date={2003},
%   pages={119--131},
%}
%	
%

\bib{hb}{article}{
   author={Heath-Brown, D. R.},
   title={The solubility of diagonal cubic Diophantine equations},
   journal={Proc. London Math. Soc. (3)},
   volume={79},
   date={1999},
   number={2},
   pages={241--259},
   
}

\bib{sel2}{article}{
   author={Selmer, Ernst S.},
   title={Sufficient congruence conditions for the existence of rational
   points on certain cubic surfaces},
   journal={Math. Scand.},
   volume={1},
   date={1953},
   pages={113--119},
   ,
}

\bib{sk}{book}{
   author={Skorobogatov, Alexei},
   title={Torsors and rational points},
   series={Cambridge Tracts in Mathematics},
   volume={144},
   publisher={Cambridge University Press, Cambridge},
   date={2001},
   pages={viii+187},
   
}

\bib{steph}{article}{
   author={Stephens, N. M.},
   title={},
   journal={Ph. D. thesis, Manchester},
   
   date={1965},
 
}

\bib{sd}{article}{
   author={Swinnerton-Dyer, Peter},
   title={The solubility of diagonal cubic surfaces},
   language={English, with English and French summaries},
   journal={Ann. Sci. \'Ecole Norm. Sup. (4)},
   volume={34},
   date={2001},
   number={6},
   pages={891--912},
   issn={0012-9593},
  
}

%
%\bib{serre}{book}{
%   author={Serre, Jean-Pierre},
%   title={Local fields},
%   series={Graduate Texts in Mathematics},
%   volume={67},
%   note={Translated from the French by Marvin Jay Greenberg},
%   publisher={Springer-Verlag},
%   place={New York},
%   date={1979},
%   pages={viii+241},
%   
%}
%

\end{biblist}
\end{bibdiv}

\end{document}